\theoremstyle{plain}
\newtheorem{theorem}{Theorem}[section]
\newtheorem{maintheorem}{Theorem}
\newtheorem{proposition}[theorem]{Proposition}
\newtheorem{lemma}[theorem]{Lemma}
\newtheorem{corollary}[theorem]{Corollary}
\newtheorem{claim}[theorem]{Claim}
\theoremstyle{definition}
\newtheorem{definition}[theorem]{Definition}
\newtheorem{remark}[theorem]{Remark}
\newcommand{\nc}{\newcommand}
\nc{\dmo}{\DeclareMathOperator}
\nc{\Q}{\mathbb{Q}}
\nc{\F}{\mathbb{F}}
\nc{\R}{\mathbb{R}}
\nc{\Z}{\mathbb{Z}}
\nc{\C}{\mathbb{C}}
\nc{\Ell}{\mathcal{L}}
\nc{\M}{\mathcal{M}}
\nc{\K}{\mathcal{K}}
\nc{\I}{\mathcal{I}}
\nc{\U}{\mathcal U}
\nc{\disk}{\mathbb{D}}
\nc{\hyp}{\mathbb{H}}
\nc{\CP}{\mathbb{CP}}
\nc{\cS}{\mathcal{S}}
\dmo{\Mod}{Mod}
\dmo{\PMod}{PMod}
\dmo{\Diff}{Diff}
\dmo{\Homeo}{Homeo}
\dmo{\dist}{dist}
\dmo\BDiff{BDiff}
\dmo\SO{SO}
\dmo\Hom{Hom}
\dmo\SL{SL}
\dmo\Sp{Sp}
\dmo\rank{rank}
\dmo\sig{sig}
\dmo\Out{Out}
\dmo\Aut{Aut}
\dmo\Inn{Inn}
\dmo\GL{GL}
\dmo\PSL{PSL}
\dmo\BHomeo{BHomeo}
\dmo\EHomeo{EHomeo}
\dmo\EDiff{EDiff}
\nc\Sig{\Sigma}
\dmo\Teich{Teich}
\dmo\Fix{Fix}
\nc{\pair}[1]{\langle #1 \rangle}
\nc{\abs}[1]{\left| #1 \right|}
\nc{\action}{\circlearrowright}
\nc{\norm}[1]{\left | \left | #1 \right | \right |}
\nc{\abcd}[4]{\left(\begin{array}{cc} #1 & #2 \\ #3 & #4 \end{array}\right)}
\dmo{\Isom}{Isom}
\nc{\normal}{\vartriangleleft}
\dmo{\Vol}{Vol}
\dmo{\im}{Im}
\dmo{\Push}{Push}
\dmo{\Conf}{Conf}
\dmo{\PConf}{PConf}
\dmo{\id}{id}
\dmo{\Jac}{Jac}
\dmo{\Pic}{Pic}
\dmo{\Stab}{Stab}
\dmo{\Arf}{Arf}
\dmo{\End}{End}
\dmo{\PB}{PB}
\dmo{\CRS}{CRS}
\nc{\Span}[1]{\operatorname{Span}(#1)}
\renewcommand{\epsilon}{\varepsilon}
\renewcommand{\tilde}{\widetilde}
\nc{\coloneq}{\mathrel{\mathop:}\mkern-1.2mu=}
\nc{\margin}[1]{\marginpar{\scriptsize #1}}
\nc{\para}[1]{\medskip\noindent\textbf{#1.}}
\nc{\red}[1]{\textcolor{red}{#1}}
\title{The Birman exact sequence does not virtually split}
\author{Lei Chen \and Nick Salter}
\email{chenlei@math.uchicago.edu \and salter@math.harvard.edu}
\date{April 30, 2018}
\begin{document}
\maketitle

	\begin{abstract}
	This paper answers a basic question about the Birman exact sequence in the theory of mapping class groups. We prove that the Birman exact sequence does not admit a section over any subgroup $\Gamma$ contained in the Torelli group with finite index. {\em A fortiori} this proves that there is no section of the Birman exact sequence for any finite-index subgroup of the full mapping class group. This theorem was announced in a 1990 preprint of G. Mess, but an error was uncovered and described in a recent paper of the first author.
	\end{abstract}
\section{Introduction}
Let $S$ be a Riemann surface of finite type. A fundamental tool in the study of the mapping class group $\Mod(S)$ of $S$ is the {\em Birman exact sequence} which describes the relationship between $\Mod(S)$ and the mapping class group $\Mod(S')$ of a surface $S'$ obtained from $S$ by filling in boundary components and/or punctures on $S$. In its most basic form, $S = \Sigma_{g,*}$ is a surface of genus $g \ge 2$ with a single puncture $* \in \Sigma_g$, and $S' = \Sigma_g$ is the closed surface obtained by filling in $*$. In this case, the Birman exact sequence takes the form
\begin{equation}\label{equation:BESdef}
1 \to \pi_1(\Sigma_g,*) \to \Mod(\Sigma_{g,*}) \to \Mod(\Sigma_g) \to 1.
\end{equation}

Given any such short exact sequence of groups $1 \to A \to B \to C \to 1$ determined by a surjective homomorphism $f: B \to C$, it is a basic question to determine whether the sequence {\em splits}: that is, whether there is a (necessarily injective) homomorphism $g: C \to B$ such that $f \circ g = \id_C$. In the context of the Birman exact sequence, this question has a topological interpretation: (\ref{equation:BESdef}) can be viewed as the short exact sequence on (orbifold) fundamental groups induced by the fibration $\mathcal M_{g,*} \to \mathcal M_g$ of the ``universal curve'' $\mathcal M_{g,*}$ over the moduli space of Riemann surfaces $\mathcal M_g$. The question of whether (\ref{equation:BESdef}) splits is equivalent\footnote{For brevity's sake, we are ignoring the complications induced by the orbifold structure; of course, these issues disappear when passing to suitably-chosen finite covers of $\mathcal M_g$.} to asking whether the universal curve $\mathcal M_{g,*}$ admits a continuous {\em section}: that is, whether it is possible to continuously choose a marked point on every Riemann surface of genus $g$. 

The Birman exact sequence (\ref{equation:BESdef}) does not split for any $g \ge 2$. This is an easy consequence of two observations. For one, it is easy to construct non-cyclic torsion subgroups of $\Mod(\Sigma_g)$, while it is also simple to show that no such subgroups exist in $\Mod(\Sigma_{g,*})$. However, this argument is somewhat unsatisfactory in that it does not address the more fundamental issue of {\em virtual splitting}. A short exact sequence $1 \to A \to B \to C \to 1$ is said to {\em virtually split} if there exists some finite-index subgroup $C' \le C$ and a homomorphism $g: C' \to B$ such that $f \circ g = \id_{C'}$. The mapping class group $\Mod(\Sigma_g)$ is virtually torsion-free, i.e. there exist finite-index subgroups $\Gamma \le \Mod(\Sigma_g)$ that are torsion-free. Thus for any such $\Gamma$, the argument above breaks down. Formulated in terms of moduli spaces, this leaves a very basic question unanswered: does there exist some finite-sheeted cover $\tilde{\mathcal M_g}$ of $\mathcal M_g$, over which it is possible to find a continuous section of the (pullback of the) universal curve?

For $g=2$ the Birman exact sequence {\em does} virtually split. This follows from the fact that every Riemann surface of genus 2 is hyperelliptic, and hence equipped with $6$ necessarily distinct Weierstrass points. The monodromy of these Weierstrass points is the full symmetric group, but by passing to the $6$-sheeted cover associated with the subgroup $S_5 \le S_6$, one of these points becomes globally distinguishable and hence the universal curve virtually has a section.

The purpose of this note is to show that a similar phenomenon cannot occur for higher genus Riemann surfaces. For the definition of the Torelli group $\mathcal I(\Sigma_g)$, see Section \ref{section:torelli}. 

\begin{maintheorem}\label{theorem:main}
For $g \ge 4$, the Birman exact sequence does not virtually split. Moreover, for any subgroup  $\Gamma \le \mathcal I(\Sigma_g)$ of finite index in the Torelli group, there is no splitting $\sigma: \Gamma \to \mathcal I(\Sigma_{g,*})$ of the Birman exact sequence restricted to $\Gamma$. 
\end{maintheorem}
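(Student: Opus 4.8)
The plan has four stages, and the hypothesis $g \ge 4$ is needed only in the last one. First I reduce to the Torelli statement: if the Birman sequence split over a finite-index subgroup $\Gamma' \le \Mod(\Sigma_g)$ it would split over $\Gamma' \cap \mathcal I(\Sigma_g)$, which has finite index in $\mathcal I(\Sigma_g)$, and the image of such a section consists of mapping classes of $\Sigma_{g,*}$ acting trivially on $H_1(\Sigma_{g,*}) = H_1(\Sigma_g)$, hence lying in $\mathcal I(\Sigma_{g,*})$. So it suffices to rule out a section $\sigma \colon \Gamma \to \mathcal I(\Sigma_{g,*})$ over an arbitrary finite-index $\Gamma \le \mathcal I(\Sigma_g)$. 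In the resulting exact sequence $1 \to \pi_1(\Sigma_g) \to \mathcal I(\Sigma_{g,*}) \to \mathcal I(\Sigma_g) \to 1$ the kernel is the point-pushing subgroup, acting on itself by inner automorphisms, while $\mathcal I(\Sigma_g)$ acts through automorphisms of $\pi_1(\Sigma_g)$ that are trivial on $H := H_1(\Sigma_g;\mathbb Z)$, hence trivial on every graded piece of the lower central series of $\pi_1(\Sigma_g)$.

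Second, I extract a cohomological obstruction from a hypothetical $\sigma$. Such a section yields a copy of $\pi_1(\Sigma_g) \rtimes_\sigma \Gamma$ inside $\mathcal I(\Sigma_{g,*})$, hence an honest homomorphism from $\Gamma$ into the group $U$ of automorphisms of the fourth nilpotent quotient of $\pi_1(\Sigma_g)$ that act trivially on all graded pieces; $U$ is two-step nilpotent, and its commutator pairing, suitably identified, is the degree-one-to-degree-two Lie bracket of the associated graded Johnson Lie algebra. Projecting $U$ onto the abelian group $N$ that records the first-order (degree-one Johnson) part of the action, one gets a homomorphism $j \colon \Gamma \to N$ which lifts, through a surjection $N \twoheadrightarrow \mathfrak h_1^{\mathrm{cl}} = \wedge^3 H/H$, the restricted first Johnson homomorphism $\bar\tau_1|_\Gamma$; here $\mathfrak h^{\mathrm{cl}}_\bullet$ is the closed-surface Johnson Lie algebra. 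Because $\sigma$ is a homomorphism, the $\mathfrak h_2$-valued $2$-cocycle $(f,f') \mapsto [\,j(f), j(f')\,]$ on $\Gamma$ must be a coboundary. Any two lifts of $\bar\tau_1|_\Gamma$ differ by a homomorphism into the inner part $H \subset N$, and brackets with inner derivations remain inner, so the ambiguity in $j$ affects only $\ker(\mathfrak h_2 \to \mathfrak h_2^{\mathrm{cl}})$; modulo this kernel one obtains the lift-independent necessary condition
\[
[\,\bar\tau_1|_\Gamma \smile \bar\tau_1|_\Gamma\,] = 0 \quad \text{in } H^2(\Gamma; \mathfrak h_2^{\mathrm{cl}}),
\]
the left-hand side being the cup square of $\bar\tau_1|_\Gamma \in H^1(\Gamma; \mathfrak h_1^{\mathrm{cl}})$ pushed forward along the bracket $[\,\cdot,\cdot\,] \colon \wedge^2 \mathfrak h_1^{\mathrm{cl}} \to \mathfrak h_2^{\mathrm{cl}}$.

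Third, I remove the word ``virtually''. By naturality $\bar\tau_1|_\Gamma = \bar\tau_1 \circ \iota$ for the inclusion $\iota \colon \Gamma \hookrightarrow \mathcal I(\Sigma_g)$, so the pairing of $[\,\bar\tau_1|_\Gamma \smile \bar\tau_1|_\Gamma\,]$ with a class in $H_2(\Gamma;\mathbb Q)$ equals the bracket applied to the image of that class under $(\bar\tau_1)_* \circ \iota_*$. Transfer gives $\iota_* \circ \mathrm{tr} = [\mathcal I(\Sigma_g) : \Gamma] \cdot \mathrm{id}$ on $H_2(-;\mathbb Q)$, so $\iota_*$ is rationally surjective; hence the necessary condition above fails for \emph{every} finite-index $\Gamma$ as soon as it fails for $\mathcal I(\Sigma_g)$ itself. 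Thus it is enough to prove that the composite
\[
[\,\cdot,\cdot\,] \circ (\bar\tau_1)_* \colon H_2\big(\mathcal I(\Sigma_g);\mathbb Q\big) \longrightarrow \mathfrak h_2^{\mathrm{cl}}
\]
is nonzero, which is a statement about the ordinary second homology of the Torelli group and the Johnson homomorphism.

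Finally, the crux: proving this composite nonzero when $g \ge 4$. I expect the main difficulty to be that it cannot be detected on an abelian cycle --- for commuting $f, f'$ the value is $[\,\bar\tau_1 f, \bar\tau_1 f'\,]$, and any two commuting Torelli elements lift to commuting mapping classes of $\Sigma_{g,*}$, so their Johnson images bracket-commute. One must therefore detect the invariant on a genuinely non-abelian $2$-cycle: either a surface subgroup $\pi_1(\Sigma_h) \hookrightarrow \mathcal I(\Sigma_g)$, or the $2$-cycle associated with a relation $\prod_i [f_i, g_i] = 1$ among Torelli elements that is not forced by disjointness of supports; on such a cycle the value is $\sum_i [\,\bar\tau_1 f_i, \bar\tau_1 g_i\,]$, which I would compute explicitly from the Johnson images of bounding-pair maps supported on a suitable subsurface and show to be nonzero in $\mathfrak h_2^{\mathrm{cl}}$, or deduce from known results on cup products of Johnson classes and on the image of the second Johnson homomorphism. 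The hypothesis $g \ge 4$ is essential here: for $g = 2$ the group $\mathcal I(\Sigma_2)$ is free, so $H_2$ vanishes (consistent with the sequence virtually splitting), and for $g = 3$ the Torelli group and the bracket target $\mathfrak h_2^{\mathrm{cl}}$ are too small for the relevant bracket expression to be forced nonzero. With this non-vanishing established, the necessary condition of the second stage is contradicted, and no section exists.
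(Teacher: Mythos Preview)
Your approach is genuinely different from the paper's. The paper argues geometrically: fixing a separating curve $c$ that cuts off a subsurface $\Sigma_{p,1}$ with $p \ge 2$ (this is where $g \ge 4$ enters), it restricts a hypothetical section to the handle-pushing subgroup $\mathcal H \le \pi_1(UT\Sigma_p)$, uses canonical reduction systems to pin down the lifts of separating twists and bounding-pair maps, and produces a homomorphism $s \colon \overline{\mathcal H} \to \pi_1(\Sigma_p)$ that is forced to be either the inclusion or the trivial map; both possibilities are then ruled out by an Euler-characteristic computation with the diagonal class in $H^2(\PConf_2(\Sigma_p))$. Your Johnson-homomorphism route is in the spirit of Morita's cohomological obstructions rather than Mess's geometric ones, and if it could be completed it would be more conceptual and would explain the role of $g$ in representation-theoretic terms.

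But there is a genuine gap at exactly the point you yourself flag as the crux. Stage four is a plan, not a proof: you say you ``would compute explicitly'' a bracket expression on some non-abelian $2$-cycle, or ``deduce from known results on cup products of Johnson classes,'' but you do neither, and you do not cite any result that does it for you. Your observation that the class vanishes on every abelian cycle is correct (it follows from the identity $\tau_2([f,f']) = [\tau_1 f, \tau_1 f']$, not from the lifting statement you give, which is neither obvious nor needed), and it is a real obstacle: abelian cycles are essentially the only explicit classes in $H_2(\mathcal I(\Sigma_g);\mathbb Q)$ one can write down and evaluate on by hand, so producing a non-abelian $2$-cycle on which the bracket is provably nonzero in $\mathfrak h_2^{\mathrm{cl}}$ is the entire content of the theorem in your framework. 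Without it, nothing has been proved. Stage two also needs tightening---you should identify $U$, $N$, and the bracket map precisely and verify that the ambiguity in $j$ really lands in $\ker(\mathfrak h_2 \to \mathfrak h_2^{\mathrm{cl}})$---but that is secondary to the missing non-vanishing.
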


From the topological point of view, it is natural to consider the more general notion of a multisection of a fiber bundle. A {\em multisection} is a continuous choice of $n$ distinct points on each fiber. For instance, the Weierstrass points form a multisection of cardinality $6$ of the universal curve in genus $2$. As in that particular example, a multisection of a fiber bundle $E \to B$ always induces a genuine section of the pullback bundle over some finite-sheeted cover $B' \to B$. We thus obtain Theorem \ref{maintheorem:corollary} below as an immediate corollary of Theorem \ref{theorem:main}. The {\em Torelli space} is the cover $\mathscr{I}_g \to \mathcal M_g$ of $\mathcal M_g$ corresponding to the subgroup $\mathcal I_g \le \Mod_g$; the universal curve $\mathcal M_{g,*}$ pulls back to give the universal family of ``homologically framed curves'' $\mathscr I_{g,*} \to \mathscr I_g$.

\begin{maintheorem}\label{maintheorem:corollary}
For $g \ge 4$, the universal family $\mathscr I_{g,*} \to \mathscr I_g$ does not admit any continuous multisection. {\em A fortiori}, for $g \ge 4$, the universal curve $\mathcal M_{g,*} \to \mathcal M_g$ does not admit any continuous multisection.
\end{maintheorem}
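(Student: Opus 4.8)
The plan is to reduce the claim to Theorem~\ref{theorem:main} using the observation, already recorded in the Introduction, that a multisection of a fiber bundle induces a genuine section of the pullback bundle over a finite cover of the base.

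Concretely, a multisection of cardinality $n$ of a fiber bundle $p : E \to B$ is a continuous section of the bundle $\operatorname{UConf}_n(E/B) \to B$ whose fiber over $b$ is the set of unordered $n$-element subsets of $E_b$. Pulling the $n!$-sheeted covering $\operatorname{Conf}_n(E/B) \to \operatorname{UConf}_n(E/B)$ back along such a section yields a finite, possibly disconnected, cover of $B$; after choosing a connected component $q : B' \to B$, the $n$ marked points of the multisection become individually labeled over $B'$, so retaining just the first of them defines a continuous section $s : B' \to E'$ of the pullback bundle $p' : E' \to B'$, where $E' = E \times_B B'$.

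I would apply this with $p : \mathscr{I}_{g,*} \to \mathscr{I}_g$ the universal family. The Torelli group is torsion-free (this holds for every $g$), hence acts freely on Teichm\"uller space, so $\mathscr{I}_g$ is an aspherical manifold with $\pi_1(\mathscr{I}_g) = \mathcal{I}(\Sigma_g)$; the connected finite cover $B' \to \mathscr{I}_g$ therefore corresponds to a finite-index subgroup $\Gamma \le \mathcal{I}(\Sigma_g)$ and is itself a $K(\Gamma,1)$. By its construction over Torelli space, $\mathscr{I}_{g,*} \to \mathscr{I}_g$ is the $\Sigma_g$-bundle classified by the Birman exact sequence $1 \to \pi_1(\Sigma_g) \to \mathcal{I}(\Sigma_{g,*}) \to \mathcal{I}(\Sigma_g) \to 1$, so pulling it back along $B' \to \mathscr{I}_g$ exhibits $p' : E' \to B'$ as the bundle classified by the restriction of that sequence to $\Gamma$, namely $1 \to \pi_1(\Sigma_g) \to P \to \Gamma \to 1$ with $P \le \mathcal{I}(\Sigma_{g,*})$ the (finite-index) preimage of $\Gamma$; in particular, $E'$ being the cover of the aspherical space $\mathscr{I}_{g,*}$ corresponding to $P$, we have $\pi_1(E') = P$.

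The section $s$ now finishes the argument: applying $\pi_1$ and using $p' \circ s = \id_{B'}$ produces a homomorphism $s_* : \Gamma = \pi_1(B') \to \pi_1(E') = P$ with $(p|_P) \circ s_* = \id_\Gamma$, and composing with the inclusion $P \hookrightarrow \mathcal{I}(\Sigma_{g,*})$ gives a splitting $\sigma : \Gamma \to \mathcal{I}(\Sigma_{g,*})$ of the Birman exact sequence restricted to the finite-index subgroup $\Gamma \le \mathcal{I}(\Sigma_g)$. For $g \ge 4$ this contradicts Theorem~\ref{theorem:main}, so $\mathscr{I}_{g,*} \to \mathscr{I}_g$ has no continuous multisection. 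The ``\emph{a fortiori}'' statement then drops out, since any continuous multisection of $\mathcal{M}_{g,*} \to \mathcal{M}_g$ would pull back along the covering $\mathscr{I}_g \to \mathcal{M}_g$ to one of $\mathscr{I}_{g,*} \to \mathscr{I}_g$ of the same cardinality. Given Theorem~\ref{theorem:main}, there is no substantial obstacle in this corollary: the only points requiring (routine) care are the multisection-to-section reduction described above and the asphericity of Torelli space, both of which are standard.
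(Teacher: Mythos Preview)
Your argument is correct and follows precisely the route the paper indicates: the paper simply remarks that a multisection always induces a genuine section over a finite-sheeted cover and declares Theorem~\ref{maintheorem:corollary} an immediate corollary of Theorem~\ref{theorem:main}. You have spelled out the details of that reduction (the passage to a connected component of the pullback of the ordered configuration cover, the asphericity of Torelli space, and the translation to a splitting on $\pi_1$), but there is no substantive difference in approach.
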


There is an important bibliographical comment to be made. Theorem \ref{theorem:main} is claimed in the 1990 preprint \cite{mess} of G. Mess. Unfortunately, as detailed in the paper \cite{lei} of the first author, Mess' argument contains a fatal error. In \cite{lei}, the first author proves Theorem \ref{theorem:main} in the special case of the full Torelli group $\Gamma = \mathcal I(\Sigma_g)$. The methods therein make essential use of some special relations in $\mathcal I(\Sigma_g)$ which disappear upon passing to finite-index subgroups. 

In the present note, we return to the outline of the argument as proposed by Mess. In brief, Mess uses the hypothesis of a splitting to construct a particular homomorphism (the homomorphism $s$ constructed and analyzed in Section \ref{section:proof}). Mess incorrectly assumes $s$ to be valued in a certain subgroup of the codomain, and derives a contradiction predicated on this assumption. Our argument proceeds by studying $s$ and deriving a contradiction as Mess does, but a more exhaustive analysis must be carried out.

The paper is organized as follows. Section \ref{section:MCG} collects the necessary facts from the theory of mapping class groups, and establishes some preliminary results. The proof of Theorem \ref{theorem:main} is then carried out in Section \ref{section:proof}.

\section{Mapping class groups}\label{section:MCG}

	\subsection{Canonical reduction systems}
The central tool for the proof of Theorem \ref{theorem:main} is the notion of a {\em canonical reduction system}, which can be viewed as an enhancement of the Nielsen-Thurston classification. We remind the reader that a curve $c\subset S$ is said to be {\em peripheral} if $c$ is isotopic to a boundary component or puncture of $S$. The Nielsen-Thurston classification asserts that each nontrivial element $f\in\Mod(S)$ is of exactly one of the following types: {\em periodic}, {\em reducible}, or {\em pseudo-Anosov}. A mapping class $f$ is {\em periodic} if $f^n = \id$ for some $n \ge 1$, and is {\em reducible} if for some $n \ge 1$, there is some nonperipheral simple closed curve $c \subset S$ such that $f^n(c)$ is isotopic to $c$. If neither of these conditions are satisfied, $f$ is said to be {\em pseudo-Anosov}. In this case, $f$ is isotopic to a homeomorphism $f'$ of a very special form. We will not need to delve into the theory of pseudo-Anosov mappings, and refer the interested reader to \cite[Chapter 13]{FM} and \cite{FLP} for more details. 

	\begin{definition}[{\bf Reduction systems}]
		A \emph{reduction system} of a reducible mapping class $h$ in $\text{\normalfont Mod}(S)$ is a set of disjoint nonperipheral curves that $h$ fixes as a set up to isotopy. A reduction system is \emph{maximal} if it is maximal with respect to inclusion of reduction systems for $h$. The \emph{canonical reduction system} $\text{\normalfont CRS}(h)$ is the intersection of all maximal reduction systems of $h$. 
	\end{definition}
	
	Canonical reduction systems allow for a refined version of the Nielsen-Thurston classification. For a reducible element $f$, there exists $n$ such that $f^n$ fixes each element in CRS$(f)$ and after cutting out CRS$(f)$, the restriction of $f^n$ on each component is either identity or pseudo-Anosov. See \cite[Corollary 13.3]{FM}. In Propositions \ref{CRS1} - \ref{PAcentral}, we list some properties of the canonical reduction systems that will be used later.
	\begin{proposition}\label{CRS1}
		$\text{\normalfont CRS}(h^n)$=$\text{\normalfont CRS}(h)$ for any $n$.
	\end{proposition}
	\begin{proof}This is classical; see  \cite[Chapter 13]{FM}.
	\end{proof}
	
	For two curves $a,b$ on a surface $S$, let $i(a,b)$ be the geometric intersection number of $a$ and $b$. For two sets of curves $P$ and $Q$, we say that $P$ and $Q$ \emph{intersect} if there exist $a\in P$ and $b\in Q$ such that $i(a,b)\neq 0$. We emphasize that ``intersection'' here refers to the intersection of curves on $S$, and not the abstract set-theoretic intersection of $P$ and $Q$ as sets.	
	\begin{proposition}\label{CRS2}
		Let $h$ be a reducible mapping class in $\text{\normalfont Mod}(S)$. If $\{\gamma\}$ and $\text{\normalfont CRS}(h)$ intersect, then no power of $h$ fixes $\gamma$.
		\label{noin}
	\end{proposition}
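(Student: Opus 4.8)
The plan is to argue by contradiction, using nothing beyond Proposition \ref{CRS1} and the definition of $\CRS(h)$ as the intersection of all maximal reduction systems. Suppose $\{\gamma\}$ and $\CRS(h)$ intersect, so that there is some $c \in \CRS(h)$ with $i(\gamma,c) \neq 0$, and suppose toward a contradiction that $h^n(\gamma)$ is isotopic to $\gamma$ for some $n \ge 1$.

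First I would note that $\gamma$ is necessarily nonperipheral: a curve isotopic to a boundary component or puncture can be isotoped off of every essential simple closed curve, so it would have zero geometric intersection number with $c$, contrary to the choice of $c$. Consequently $\{\gamma\}$ is itself a reduction system for $h^n$, since $h^n$ preserves its isotopy class. Because there is an a priori bound on the number of pairwise disjoint, pairwise non-isotopic essential simple closed curves on $S$, this reduction system is contained in some maximal reduction system $R$ of $h^n$.

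Next I would combine the definitions with Proposition \ref{CRS1}: by definition $\CRS(h^n)$ is contained in every maximal reduction system of $h^n$, hence $\CRS(h^n) \subseteq R$, and $\CRS(h^n) = \CRS(h)$, so $c \in \CRS(h) = \CRS(h^n) \subseteq R$. But $R$, being a reduction system, consists of curves that are pairwise disjoint up to isotopy, and both $\gamma$ and $c$ lie in $R$; therefore $i(\gamma, c) = 0$, contradicting the choice of $c$. This completes the argument.

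I do not anticipate any serious obstacle. The only points that require (minor) care are the reduction to the nonperipheral case and the explicit invocation of the finiteness statement guaranteeing that $\{\gamma\}$ sits inside a maximal reduction system; the substantive input is entirely carried by Proposition \ref{CRS1} together with the definition of the canonical reduction system.
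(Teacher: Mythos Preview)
Your argument is correct and follows essentially the same route as the paper: assume some power $h^n$ fixes $\gamma$, embed $\gamma$ in a maximal reduction system, observe that the canonical reduction system is contained in it, and derive a contradiction from the disjointness of curves in a reduction system. Your version is in fact slightly more careful than the paper's, making explicit the passage through $\CRS(h^n)$ via Proposition~\ref{CRS1} rather than tacitly assuming $\gamma$ lies in a maximal reduction system for $h$ itself.
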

	
	\begin{proof}
		Suppose that $h^n$ fixes $\gamma$. Therefore $\gamma$ belongs to a maximal reduction system $M$. By definition, $\CRS(h)\subset M$. However $\gamma$ intersects some curve in CRS$(f)$; this contradicts the fact that $M$ is a set of disjoint curves. 
	\end{proof}
	\begin{proposition}\label{CRS3}
		Suppose that $h,f\in \text{\normalfont Mod}(S)$ and $fh=hf$. Then $\text{\normalfont CRS}(h)$ and $\text{\normalfont CRS}(f)$ do not intersect.
		\label{CRS(x,y)}
	\end{proposition}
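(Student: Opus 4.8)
The plan is to reduce the statement to Proposition \ref{CRS2} by exploiting the fact that the canonical reduction system is equivariant under conjugation, together with the observation that two commuting mapping classes each preserve the other's canonical reduction system as a set.

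First I would record the (standard) equivariance statement: for every $g,h\in\Mod(S)$ one has $\CRS(ghg^{-1}) = g(\CRS(h))$, where $g$ acts on isotopy classes of curves in the evident way. This is immediate from the definition of $\CRS$: conjugation by $g$ gives a bijection between reduction systems of $h$ and reduction systems of $ghg^{-1}$ which preserves disjointness, non-peripherality, and inclusion, hence carries maximal reduction systems to maximal reduction systems; intersecting all of them yields the claim (both sides being empty when $h$ is periodic or pseudo-Anosov). Applying this to the hypothesis $fh=hf$, i.e. to the relations $hfh^{-1}=f$ and $fhf^{-1}=h$, gives $h(\CRS(f))=\CRS(f)$ and $f(\CRS(h))=\CRS(h)$. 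In words: $f$ fixes $\CRS(h)$ as a set, and $h$ fixes $\CRS(f)$ as a set.

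Now suppose toward a contradiction that $\CRS(h)$ and $\CRS(f)$ intersect, say $\gamma\in\CRS(h)$ and $\delta\in\CRS(f)$ with $i(\gamma,\delta)\neq 0$. In particular $\CRS(f)\neq\emptyset$, so $f$ is reducible and Proposition \ref{CRS2} applies to $f$. On the one hand, $\CRS(h)$ is a finite collection of pairwise non-isotopic disjoint curves on a finite-type surface, and $f$ permutes it, so $f^N$ fixes $\gamma$ (up to isotopy) where $N$ is the order of the induced permutation. On the other hand, $\{\gamma\}$ and $\CRS(f)$ intersect since $i(\gamma,\delta)\neq 0$ and $\delta\in\CRS(f)$, so Proposition \ref{CRS2} forbids any power of $f$ from fixing $\gamma$. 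This contradiction completes the argument.

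I do not expect a genuine obstacle here: the proof is essentially bookkeeping once the conjugation-equivariance of $\CRS$ is in hand. The only points requiring a little care are that finiteness of $\CRS(h)$ is what upgrades ``$f$ fixes $\CRS(h)$ as a set'' to ``some power of $f$ fixes $\gamma$,'' and that $\CRS(f)\neq\emptyset$ indeed forces $f$ to be reducible, so that the hypothesis of Proposition \ref{CRS2} is met.
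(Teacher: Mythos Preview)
Your argument is correct and is essentially the same as the paper's: both use conjugation-equivariance of $\CRS$ to deduce that each element preserves the other's canonical reduction system setwise, pass to a power to get a fixed curve, and then invoke Proposition~\ref{CRS2} for the contradiction. The only cosmetic difference is that you apply Proposition~\ref{CRS2} to $f$ and a curve $\gamma\in\CRS(h)$, whereas the paper swaps the roles of $f$ and $h$.
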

	\begin{proof}
		Conjugating, CRS($hfh^{-1})=h(\CRS(f))$. Since $hfh^{-1}=f$, it follows that CRS$(f)=h(\CRS(f))$. Therefore $h$ fixes the whole set CRS$(f)$. There is some $n \ge 1$ such that $h^n$ fixes all curves element-wise in CRS$(f)$. By Proposition \ref{noin}, curves in CRS$(h)$ do not intersect curves in CRS$(f)$.
	\end{proof}
	
	For a curve $a$ on a surface $S$, denote by $T_a$ the Dehn twist about $a$. More generally, a {\em Dehn multitwist} is any mapping class of the form
	\[
	T : = \prod T_{a_i}^{k_i}
	\]
	for a collection of pairwise-disjoint simple closed curves $\{a_i\}$ and arbitrary integers $k_i$. 
	
	\begin{proposition}\label{CRSmulti}
	Let 
	\[
	T : = \prod T_{a_i}^{k_i}
	\]
	be a Dehn multitwist. Then
	\[
	\CRS(T) = \{a_i\}.
	\]
	\end{proposition}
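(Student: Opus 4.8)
The plan is to establish the two inclusions $\{a_i\}\subseteq\CRS(T)$ and $\CRS(T)\subseteq\{a_i\}$ in turn, after the harmless normalization that the $a_i$ are pairwise non-isotopic and each $k_i\neq 0$ (a factor $T_{a_i}^{0}$ is trivial, and twists about isotopic curves combine; after these reductions $\{a_i\}$ denotes the curves carrying a nonzero power). The whole argument rests on one preliminary observation: \emph{no curve in any reduction system of $T$ meets any $a_i$}. Indeed, if $\gamma$ belongs to a reduction system $\Gamma$ of $T$, then since $\Gamma$ is finite and $T$-invariant the $\langle T\rangle$-orbit of $\gamma$ inside $\Gamma$ is finite, so $T^N\gamma = \gamma$ up to isotopy for some $N\geq 1$. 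But $T^N = \prod_i T_{a_i}^{Nk_i}$ is again a multitwist about the $a_i$, and no nonzero power of a multitwist fixes a curve crossing its support: quantitatively, the standard lower bounds for geometric intersection numbers under twisting (\cite[Chapter 3]{FM}) give $i(T^N\gamma,\gamma)\geq |Nk_j|\, i(a_j,\gamma)^2 > 0$ whenever $i(a_j,\gamma)\neq 0$, contradicting $T^N\gamma=\gamma$. Thus every $\gamma\in\Gamma$ is disjoint from $\bigcup_i a_i$; conversely any system of disjoint nonperipheral curves missing all the $a_i$ can be isotoped off the support of $T$ and so is fixed (curvewise) by $T$. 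Hence the reduction systems of $T$ are precisely the systems of pairwise disjoint curves, nonperipheral in $S$, that are disjoint from $\bigcup_i a_i$.

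For $\{a_i\}\subseteq\CRS(T)$: let $\Gamma$ be any maximal reduction system of $T$ and fix an index $j$. The curve $a_j$ is nonperipheral, disjoint from every $a_i$, and — by the preliminary observation applied to $\Gamma$ — disjoint from every curve of $\Gamma$. So if $a_j$ were not isotopic to a curve of $\Gamma$, then $\Gamma\cup\{a_j\}$ would be a strictly larger reduction system of $T$, contradicting maximality. Therefore every maximal reduction system of $T$ contains all the $a_i$, and hence so does $\CRS(T)$, the intersection of all of them.

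For $\CRS(T)\subseteq\{a_i\}$ I would argue by contraposition: given a nonperipheral curve $c$ not isotopic to any $a_i$, I produce a maximal reduction system of $T$ avoiding $c$. If $c$ meets some $a_i$, then by the observation $c$ lies in no reduction system whatsoever, so assume $c$ is disjoint from $\bigcup_i a_i$ and let $S_0$ be the component of the cut surface $S\setminus\bigcup_i a_i$ containing $c$. Since $c$ is nonperipheral in $S$ and is not isotopic to any $a_i$, it is nonperipheral in $S_0$; choosing a nonperipheral curve $d\subset S_0$ with $i(c,d)\neq 0$, completing $\{d\}$ to a pants decomposition of $S_0$, and then adjoining pants decompositions of the remaining components, yields a pants decomposition $P$ of $S\setminus\bigcup_i a_i$ with $c\notin P$. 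Now $\{a_i\}\cup P$ is a reduction system of $T$ (its curves are pairwise disjoint, nonperipheral in $S$, and miss every $a_i$), and it is maximal because $\{a_i\}\cup P$ is a pants decomposition of $S$, to which no further disjoint nonperipheral curve can be added. Since $c\notin\{a_i\}\cup P$, this shows $c\notin\CRS(T)$, completing the proof.

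The only nontrivial bookkeeping, and it causes no real difficulty, is: the precise intersection-number inequality invoked in the first paragraph (a textbook computation about (multi)twists); and, in the last paragraph, the elementary facts about cut surfaces — that a curve $d$ crossing $c$ can be found inside the single piece $S_0$, that being nonperipheral in $S_0$ forces being nonperipheral in $S$ for the curves that arise, and that a pants decomposition of $S$ is maximal among systems of disjoint nonperipheral curves.
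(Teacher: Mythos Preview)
Your proof is correct and follows essentially the same line as the paper's: both first use a multitwist intersection-number bound to show every reduction system misses the $a_i$ (whence $\{a_i\}\subseteq\CRS(T)$ by maximality), and then, for the reverse inclusion, exhibit for each candidate $c\notin\{a_i\}$ a curve $d$ disjoint from the $a_i$ with $i(c,d)\neq 0$ and extend $\{a_i,d\}$ to a maximal reduction system omitting $c$. Your version is simply more explicit in that last step, building the maximal system as a pants decomposition rather than invoking an unspecified extension.
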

	\begin{proof} Firstly $T$ cannot contain any simple closed curves $b$ for which $i(b, a_i) \ne 0$, since no power of $T$ preserves $b$. This can be seen from the equation
	\[
	i(\prod T_{a_i}^{k_i}(b),b)=\sum |k_i|i(a_i,b)\neq 0=i(b,b);
	\]
	see \cite[Proposition 3.2]{FM}. It follows that if $S$ is any reduction system for $T$, then $S \cup \{a_i\}$ is also a reduction system, and hence that $\{a_i\} \subset \CRS(T)$. If $\gamma$ is disjoint from each element of $\{a_i\}$ but not equal to any $a_i$, then there exists some curve $\delta$, also disjoint and distinct from each $a_i$, such that $i(\gamma,\delta) \ne 0$. As both $\{a_i\} \cup \{\gamma\}$ and $\{a_i\} \cup \{\delta\}$ are reduction systems for $T$, this shows that no such $\gamma$ can be contained in $\CRS(T)$ and hence that $\CRS(T) = \{a_i\}$ as claimed. \end{proof}

	The final result we will require follows from the theory of canonical reduction systems. It appears as \cite[Theorem 1]{mccarthy1982normalizers}.
	\begin{proposition}[McCarthy]\label{PAcentral}
	Let $S$ be a Riemann surface of finite type, and let $f \in \Mod(S)$ be a pseudo-Anosov element. Then the centralizer subgroup of $f$ in $\Mod(S)$ is virtually cyclic.
	\end{proposition}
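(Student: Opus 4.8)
The plan is to run the classical argument via Thurston's theory of measured foliations and the dynamics of pseudo-Anosov maps on Thurston's compactification $\overline{\Teich(S)} = \Teich(S) \cup \mathcal{PMF}(S)$. Recall that $f$ comes equipped with a transverse, jointly-filling pair of invariant measured foliations $(\mathcal{F}^u,\mu^u)$, $(\mathcal{F}^s,\mu^s)$ and a dilatation $\lambda = \lambda(f) > 1$ with $f_*\mu^u = \lambda\mu^u$ and $f_*\mu^s = \lambda^{-1}\mu^s$. The starting point is Thurston's theorem (see \cite[Chapter 14]{FM}, \cite{FLP}) that $f$ acts on $\overline{\Teich(S)}$ with north--south dynamics, its fixed-point set being exactly $\{[\mathcal{F}^u],[\mathcal{F}^s]\} \subset \mathcal{PMF}(S)$. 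Hence this unordered pair is an invariant of $f$ in $\Mod(S)$: every $g$ in the centralizer $C(f)$ must preserve the fixed-point set of $f$, so it permutes $\{[\mathcal{F}^u],[\mathcal{F}^s]\}$. This yields a homomorphism $C(f) \to \Z/2$; writing $C^+(f)$ for its kernel, a subgroup of index at most $2$, it suffices to show $C^+(f)$ is virtually cyclic.

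Next I would build a homomorphism recording how $C^+(f)$ rescales the foliations. For $g \in C^+(f)$ one has $g_*\mathcal{F}^u = c(g)\,\mathcal{F}^u$ and $g_*\mathcal{F}^s = c'(g)\,\mathcal{F}^s$ as measured foliations for some $c(g), c'(g) \in \R_{>0}$; applying the homeomorphism-invariance of the intersection pairing to the positive quantity $i(\mathcal{F}^u,\mathcal{F}^s)$ forces $c(g)c'(g) = 1$. Thus $g \mapsto c(g)$ is a homomorphism $c : C^+(f) \to \R_{>0}$, and since $f \in C^+(f)$ with $c(f) = \lambda > 1$, its image is nontrivial (in particular $C^+(f)$ is infinite). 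The proof is then reduced to two claims.

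First, $\ker c$ is finite: if $c(g) = 1$ then $g$ fixes both $\mathcal{F}^u$ and $\mathcal{F}^s$ as measured foliations, hence is isotopic to an automorphism of the singular flat (half-translation) structure $X$ on $S$ determined by the pair $(\mathcal{F}^u,\mathcal{F}^s)$; the automorphism group of a compact flat structure is finite (it injects into the finite automorphism group of the underlying Riemann surface when $g \ge 2$, and when $g \ge 2$ a nontrivial such automorphism is not isotopic to the identity), with the low-complexity surfaces admitting pseudo-Anosov maps handled directly. Second, the image of $c$ is discrete: if $c(g) \ne 1$ then $g$ (or $g^{-1}$) carries the transverse filling pair $\mathcal{F}^u,\mathcal{F}^s$ with expansion factor $c(g)^{\pm 1} \ne 1$, so $g$ is itself pseudo-Anosov with dilatation $c(g)^{\pm 1}$; since pseudo-Anosov dilatations on the fixed surface $S$ are bounded away from $1$ (e.g. Penner's lower bound, \cite[Chapter 14]{FM}), the subgroup $\log c(C^+(f)) \le \R$ does not accumulate at $0$, hence is discrete, hence $c(C^+(f)) \cong \Z$. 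Combining the two claims, $C^+(f)$ fits in $1 \to \ker c \to C^+(f) \to \Z \to 1$ with $\ker c$ finite, so a preimage of a generator of $\Z$ has infinite order and generates a cyclic subgroup of index $|\ker c| < \infty$; thus $C^+(f)$, and therefore $C(f)$, is virtually cyclic.

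I expect the first claim to be the main obstacle. One has to argue with care that fixing the two measured foliations up to the equivalence relation built into Thurston's theory (isotopy together with Whitehead moves) is genuinely enough to realize $g$ by an honest automorphism of the flat structure $X$, and then invoke finiteness of $\Aut(X)$ and its faithfulness in $\Mod(S)$ in the relevant range of complexity. The discreteness claim is softer but still leans on a real theorem — the non-accumulation of pseudo-Anosov dilatations at $1$ — rather than on a formal manipulation. In writing this up I would either reproduce these inputs from \cite{FM}, \cite{FLP} or simply cite \cite{mccarthy1982normalizers} directly, as is done in the statement.
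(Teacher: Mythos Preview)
Your proof is correct and follows the standard route via Thurston's north--south dynamics and the stretch-factor homomorphism; this is essentially McCarthy's own argument. The paper, however, does not prove this proposition at all --- it simply records the statement and cites \cite{mccarthy1982normalizers} as a black box. So there is nothing to compare beyond noting that you have supplied what the authors chose to outsource.

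One small remark on your write-up: for the discreteness of $c(C^+(f))$ you invoke the lower bound on pseudo-Anosov dilatations, which is correct but heavier than necessary. A lighter alternative is to use the proper discontinuity of the $\Mod(S)$-action on $\Teich(S)$: the Teichm\"uller geodesic determined by $(\mathcal{F}^u,\mathcal{F}^s)$ is the axis of $f$, and $c(g)$ measures translation length along it, so accumulation of $\log c$ at $0$ would produce infinitely many distinct mapping classes moving a fixed point of $\Teich(S)$ into a compact set. Either argument is fine.
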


	\subsection{The Torelli group, separating twists, and bounding pair maps}\label{section:torelli}
	The action of a diffeomorphism $f$ on the homology of a surface $S$ is well-defined on the level of isotopy, giving rise to the {\em symplectic representation}
	\[
	\Psi:\Mod(S) \to \Aut(H_1(S;\Z)).
	\]
	The {\em Torelli group} is the kernel subgroup $\mathcal I(S) := \ker(\Psi)$.  There are several classes of elements of the Torelli group that will feature in the proof of Theorem \ref{theorem:main}. For context, background, and proofs of the following assertions, see \cite[Chapter 6]{FM}. A {\em separating twist} is a Dehn twist $T_c$, where $c$ is a separating curve on $S$. Separating twists $T_c \in \mathcal I(S)$ are elements of the Torelli group. A pair of curves $\{a,b\} \subset S$ is said to be a \emph{bounding pair} if $a,b$ are individually nonseparating, but $a \cup b$ bounds a subsurface of $S$ of positive genus on both sides. A {\em bounding pair map} is the Dehn multitwist $T_a T_b^{-1}$; necessarily $T_a T_b^{-1} \in \mathcal I(S)$ for any bounding pair $\{a,b\}$.

	\subsection{Point- and disk-pushing subgroups} Recall the Birman exact sequence (\ref{equation:BESdef}). The kernel $\pi_1(\Sigma_g,*)$ is referred to as the {\em point-pushing subgroup} of $\Mod(\Sigma_{g,*})$. Henceforth we will tidy our notation and omit reference to the basepoint. An element $\alpha \in \pi_1(\Sigma_g)$ determines a mapping class $\alpha \in \Mod(\Sigma_{g,*})$ as follows: one ``pushes'' the marked point $*$ along the loop determined by $\alpha$.
	
	There is an analogous notion of a ``disk-pushing subgroup''. Let $S = \Sigma_{g,1}$ denote a surface of genus $g$ with one boundary component. In this setting, the Birman exact sequence takes the form
	\begin{equation}\label{equation:BESdisk}
	1 \to \pi_1(UT\Sigma_g) \to \Mod(\Sigma_{g,1}) \to \Mod(\Sigma_g) \to 1.
	\end{equation}	
	Here, $UT\Sigma_g$ denotes the {\em unit tangent bundle} of $\Sigma_g$; i.e. the $S^1$-subbundle of the tangent bundle $T\Sigma_g$ consisting of unit-length tangent vectors (relative to an arbitrarily-chosen Riemannian metric). In this context, the kernel $\pi_1(UT\Sigma_g)$ is known as the {\em disk-pushing subgroup}. An element $\tilde \alpha \in \pi_1(UT\Sigma_g)$ determines a ``disk-pushing'' diffeomorphism of $\Sigma_{g,1}$ as follows: one treats the boundary component $\Delta$ as the boundary of a disk $D$, and ``pushes'' $D$ along the path determined by the image $\alpha \in \pi_1(\Sigma_g)$. The extra information of the tangent vector encoded in $\tilde \alpha$ is used to give a consistent framing of $\partial D$ along its path.
	
	The proposition below records some basic facts about point- and disk-pushing subgroups. In item \ref{item:pushCRS} below, the {\em support} of a (not necessarily simple) element $\alpha \in \pi_1(\Sigma_g)$ is defined to be the minimal subsurface $S_\alpha \subset \Sigma_{g,*}$ that contains $\alpha$ for which every component of $\partial S_\alpha$ is {\em essential}, i.e. non-nullhomotopic and nonperipheral.
	\begin{proposition}\label{lemma:pushfacts}\ 
	\begin{enumerate}
	\item\label{item:torelli} There are containments $\pi_1(\Sigma_g) \le \mathcal I(\Sigma_{g,*})$ and $\pi_1(UT\Sigma_g) \le \mathcal I(\Sigma_{g,1})$.
	\item Let $\alpha \in \pi_1(\Sigma_g)$ be a {\em simple} element. Viewed as a point-push map, $\alpha$ has an expression as a bounding pair map
	\[
	\alpha = T_{\alpha_L} T_{\alpha_R}^{-1},
	\]
	where $\alpha_L, \alpha_R$ are the simple closed curves on $\Sigma_{g,*}$ lying to the left (resp. right) of $\alpha$. 
	\item Let $\zeta \in \pi_1(UT\Sigma_g)$ be a generator of the kernel of the map $\pi_1(UT\Sigma_g) \to \pi_1(\Sigma_g)$. Viewed as a push map, $\zeta = T_\Delta$, the twist about the boundary component of $\Sigma_{g,1}$.
	\item\label{item:tildealpha} Let $\tilde \alpha \in \pi_1(UT\Sigma_g)$ be simple (in the sense that $\alpha \in \pi_1(\Sigma_g)$ is simple). Viewed as a disk-pushing map, there is an expression
	\[
	\tilde \alpha = T_{\alpha_L} T_{\alpha_R}^{-1} T_\Delta^k
	\]
	for some $k \in \Z$. 
	
	\item\label{item:pushCRS} Let $\alpha \in \pi_1(\Sigma_g)$ be an arbitrary (not necessarily simple) element. Then 
	\[
	\CRS(\alpha) = \partial(S_\alpha),
	\]
	the (possibly empty) boundary of the support $S_\alpha$. Moreover, $\alpha$ is pseudo-Anosov on the subsurface $S_\alpha$.
	\end{enumerate}
	\end{proposition}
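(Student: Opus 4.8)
The plan is to reduce the first four items to standard local models for push maps, reserving the real work for item \ref{item:pushCRS}.

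Item \ref{item:torelli} follows from functoriality of the symplectic representation. The inclusions $\Sigma_{g,*}\hookrightarrow\Sigma_g$ and $\Sigma_{g,1}\hookrightarrow\Sigma_g$ induce $\Mod$-equivariant isomorphisms on first homology, so the symplectic representation $\Psi$ of $\Mod(\Sigma_{g,*})$ (resp. $\Mod(\Sigma_{g,1})$) factors through the forgetful (resp. capping) homomorphism to $\Mod(\Sigma_g)$; since the point-pushing subgroup $\pi_1(\Sigma_g)$ and the disk-pushing subgroup $\pi_1(UT\Sigma_g)$ are precisely the kernels of those homomorphisms, they lie in $\ker\Psi=\mathcal I$. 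The second and third items are the usual pictures: pushing $*$ once around an embedded loop $\gamma$ drags an annular neighborhood of $\gamma$ across itself, which one reads off as $T_{\alpha_L}T_{\alpha_R}^{-1}$, and the generator of $\ker(\pi_1(UT\Sigma_g)\to\pi_1(\Sigma_g))$ spins the framing of $\partial D$ once without moving $D$, which is $T_\Delta$ by definition (see \cite[Chapter 4]{FM}). For item \ref{item:tildealpha}, capping $\Delta$ with a once-punctured disk gives a homomorphism $\Mod(\Sigma_{g,1})\to\Mod(\Sigma_{g,*})$ with kernel $\langle T_\Delta\rangle$ sending the disk-push $\tilde\alpha$ to the point-push $\alpha=T_{\alpha_L}T_{\alpha_R}^{-1}$, so $\tilde\alpha$ differs from $T_{\alpha_L}T_{\alpha_R}^{-1}$ (with $\alpha_L,\alpha_R$ now read on $\Sigma_{g,1}$, disjoint from $\Delta$) by a power $T_\Delta^{k}$; equivalently, the disk-push is supported on the pair of pants cut off by $\alpha_L$, $\alpha_R$, $\Delta$, whose mapping class group is freely generated by the three boundary twists.

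Item \ref{item:pushCRS} is the substantive statement. Let $\gamma$ represent $\alpha$, so $\alpha=\Push(\gamma)$. I would first check that $\gamma$ fills $S_\alpha$: were $\gamma$ disjoint from an essential curve of $S_\alpha$, then removing that curve and filling back any once-punctured disk component that appears would yield a strictly smaller subsurface with essential boundary still containing $\gamma$, contradicting minimality of the support. By Kra's theorem, the point-push of a filling loop is pseudo-Anosov, so $\alpha$ restricted to $S_\alpha$ is pseudo-Anosov; in particular $\alpha|_{S_\alpha}$ fixes no essential simple closed curve and no essential arc of $S_\alpha$, even after passing to powers and up to isotopy. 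Since $\alpha$ is supported in $S_\alpha$, the frontier $\partial S_\alpha$ is a reduction system for $\alpha$, so $\CRS(\alpha)$ is contained in some maximal reduction system that contains $\partial S_\alpha$; hence each curve of $\CRS(\alpha)$ is disjoint from $\partial S_\alpha$ and thus isotopic into $S_\alpha$ or into $N:=\Sigma_{g,*}\setminus S_\alpha$. A curve essential in the interior of $S_\alpha$ is fixed by no power of $\alpha$ (pseudo-Anosov behavior), while a curve $c$ essential in the interior of $N$ can be excluded from a maximal reduction system — extend to a pants decomposition of $N$ some curve meeting $c$ essentially, then adjoin $\partial S_\alpha$ — so $c\notin\CRS(\alpha)$; this gives $\CRS(\alpha)\subseteq\partial S_\alpha$. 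Conversely, fix a component $\delta$ of $\partial S_\alpha$ and a maximal reduction system $M$: if $\delta\notin M$ then, because $\alpha$ fixes $\delta$, maximality forces some $c\in M$ with $i(c,\delta)\neq 0$, and such a $c$ meets $S_\alpha$ in an essential arc (the alternatives, that $c$ is isotopic off $S_\alpha$ or that it cuts off a once-punctured disk hence is peripheral, are impossible for a curve in a reduction system), so a power of $\alpha$ fixing $c$ would fix that essential arc of $\alpha|_{S_\alpha}$ — impossible for a pseudo-Anosov. Hence $\delta$ lies in every maximal reduction system, $\partial S_\alpha\subseteq\CRS(\alpha)$, and the two inclusions give $\CRS(\alpha)=\partial S_\alpha$.

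The main obstacle is the pseudo-Anosov assertion: pinning down the statement that $\gamma$ fills $S_\alpha$ and invoking Kra's theorem (or reproving it via invariant measured foliations and train tracks) that a filling point-push is pseudo-Anosov. Granting that, the identification $\CRS(\alpha)=\partial S_\alpha$ is a bookkeeping exercise resting on the definition of the canonical reduction system and Propositions \ref{CRS1}, \ref{CRS2}, and \ref{CRSmulti}, the one external ingredient being the standard fact that a pseudo-Anosov mapping class of a surface with boundary fixes no isotopy class of essential arc.
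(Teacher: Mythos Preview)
Your proposal is correct and follows the same route as the paper: items \ref{item:torelli}--\ref{item:tildealpha} are treated as standard (the paper simply cites \cite[Chapters 4,6]{FM}), and item \ref{item:pushCRS} is deduced from Kra's theorem (the paper cites \cite[Theorem 14.6]{FM}). The only difference is that you actually carry out the translation of Kra's theorem into the language of canonical reduction systems, whereas the paper leaves this as a citation; your argument for $\CRS(\alpha)=\partial S_\alpha$ is sound, though the parenthetical about an arc ``cutting off a once-punctured disk hence [$c$] is peripheral'' is slightly muddled---such an arc is already essential in $S_\alpha$ (the puncture obstructs isotoping it into $\partial S_\alpha$), so the pseudo-Anosov argument applies directly without needing to rule that case out separately.
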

	\begin{proof}
	Items \eqref{item:torelli}- \eqref{item:tildealpha} are standard; see \cite[Chapters 4,6]{FM} for details. Item \eqref{item:pushCRS} is a reformulation of a theorem of Kra, adapted to the language of canonical reduction systems. See \cite[Theorem 14.6]{FM}.
	\end{proof}

	In Section \ref{section:proof}, we will make use of the following lemma concerning the action of separating twist maps on the underlying fundamental group.
	\begin{lemma}\label{lemma:BPint}
	Let $T_c \in \mathcal I(\Sigma_{g,*})$ be a Dehn twist about a separating simple closed curve $c$. Let $\alpha \in \pi_1(\Sigma_g)$ be an arbitrary element, represented as a (not necessarily simple) curve based at $* \in \Sigma_{g,*}$. If
	\[
	T_c^k(\alpha) = \alpha
	\]
	for any $k \ne 0$, then there exists a representative of $\alpha$ that is disjoint from $c$. 
	\end{lemma}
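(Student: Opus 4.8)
The plan is to work with the curve $c$ directly and the subsurfaces it bounds, using the hypothesis $T_c^k(\alpha)=\alpha$ to control how $\alpha$ interacts with $c$. Since $c$ is separating, it divides $\Sigma_{g,*}$ into two pieces, and the marked point $*$ lies in one of them; call the piece containing $*$ (together with a collar of $c$) the subsurface $R$, so that $c$ is a boundary component of $R$. The element $\alpha$ is a loop based at $*$, hence it lies in $R \cup (\text{the other side})$, and what we want to show is that it can be isotoped entirely into $R$, i.e.\ made disjoint from $c$. The key structural input is Proposition \ref{lemma:pushfacts}\eqref{item:pushCRS}: the canonical reduction system of $\alpha$ (as a point-push map) is $\partial(S_\alpha)$, where $S_\alpha$ is the support of $\alpha$, and $\alpha$ is pseudo-Anosov on $S_\alpha$.

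First I would analyze the relationship between $c$ and $\CRS(\alpha) = \partial(S_\alpha)$. Since $T_c$ and $\alpha$ are both elements of $\Mod(\Sigma_{g,*})$ and $T_c^k$ commutes with $\alpha$ (this is exactly the hypothesis $T_c^k(\alpha)=\alpha$, reinterpreted: $T_c^k \alpha T_c^{-k} = \alpha$ in $\Mod(\Sigma_{g,*})$), Proposition \ref{CRS3} applies: $\CRS(T_c^k)$ and $\CRS(\alpha)$ do not intersect. By Proposition \ref{CRS1}, $\CRS(T_c^k) = \CRS(T_c) = \{c\}$ (using Proposition \ref{CRSmulti} with the single curve $c$, noting $c$ is nonperipheral since it is separating and we are in genus $g \ge 2$ — more carefully, if $c$ were peripheral the statement is either vacuous or trivial, so assume it is not). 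Hence $c$ does not intersect $\partial(S_\alpha)$: we may isotope so that $c$ is disjoint from $\partial(S_\alpha)$, and therefore $c$ either lies in the complement of $S_\alpha$, or $c$ lies inside $S_\alpha$, or $c$ is isotopic to one of the boundary curves of $S_\alpha$.

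Next I would rule out the possibility that $c$ lies essentially inside $S_\alpha$ in a way that forces $\alpha$ to cross it. If $c$ is disjoint from all of $S_\alpha$, then $\alpha$ (which is contained in $S_\alpha$) is already disjoint from $c$, and we are done. If $c$ is isotopic to a component of $\partial(S_\alpha)$, then again $\alpha$ may be pushed off of $c$, done. The remaining case is that $c$ is an essential separating curve lying in the interior of $S_\alpha$. Here I would use that $\alpha$ is pseudo-Anosov on $S_\alpha$: a pseudo-Anosov map on $S_\alpha$ fixes no essential isotopy class of curve in $S_\alpha$, and more to the point $T_c^k$ restricted to $S_\alpha$ commutes with the pseudo-Anosov $\alpha|_{S_\alpha}$; by Proposition \ref{PAcentral} (McCarthy) the centralizer of a pseudo-Anosov is virtually cyclic, and in particular contains no nontrivial Dehn twist $T_c^k$ with $k \ne 0$ (a Dehn twist about an essential curve has infinite order and generates, with a pseudo-Anosov, a subgroup that is not virtually cyclic — or more simply, $T_c^k$ fixing the pseudo-Anosov's unstable foliation is impossible since $T_c^k(c) = c$ but a pseudo-Anosov moves $c$). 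This gives a contradiction, so this case does not occur.

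The main obstacle, and the step that requires the most care, is the bookkeeping in the last case: one must be precise about whether $T_c^k$ genuinely \emph{restricts} to a nontrivial element of $\Mod(S_\alpha)$ — this requires knowing that $c$ is not peripheral \emph{in} $S_\alpha$ (i.e.\ not parallel to a component of $\partial S_\alpha$), which is exactly the case we isolated, and that the restriction $\alpha|_{S_\alpha}$ is genuinely pseudo-Anosov and not merely ``pseudo-Anosov up to peripheral issues.'' I would lean on Proposition \ref{lemma:pushfacts}\eqref{item:pushCRS} for the latter and on the description of centralizers (Proposition \ref{PAcentral}) together with Proposition \ref{CRS3}/\ref{CRS2} to make the former rigorous, perhaps phrasing the final contradiction entirely in terms of canonical reduction systems rather than passing to the subsurface: if $c$ is essential in $S_\alpha$ and disjoint from $\partial S_\alpha$, then $\{c\}$ and $\CRS(\alpha)$ do not intersect \emph{and} $c$ is not in $\CRS(\alpha)$, yet some power of $\alpha$ would have to fix $c$ (being pseudo-Anosov on $S_\alpha$, it does not), contradicting the structure. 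A short lemma isolating ``an essential curve in the interior of $S_\alpha$ disjoint from $\partial S_\alpha$ is not fixed by any power of $\alpha$'' — which follows from $\alpha$ being pseudo-Anosov on $S_\alpha$ — cleanly closes the argument.
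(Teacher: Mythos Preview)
Your argument is correct and follows essentially the same route as the paper's proof: translate the hypothesis into commutation of $T_c^k$ and $\alpha$ in $\Mod(\Sigma_{g,*})$, apply Propositions~\ref{CRS3}, \ref{CRSmulti}, and \ref{lemma:pushfacts}\eqref{item:pushCRS} to conclude that $c$ is disjoint from $\partial S_\alpha$, and then deduce $c \subset \Sigma_{g,*}\setminus S_\alpha$. The paper's write-up is a terse two sentences that simply asserts the ``moreover $\{c\}\subset \Sigma_{g,*}\setminus S_\alpha$'' without spelling out your case analysis; your expansion --- using the pseudo-Anosov behavior of $\alpha$ on $S_\alpha$ to rule out $c$ lying essentially in the interior --- is exactly the content hidden behind that assertion, and your cleanest phrasing of it (commutation forces $\alpha$ to fix $c$, contradicting that a pseudo-Anosov fixes no essential curve) is the right way to close it.
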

	\begin{proof}
	The hypothesis implies that $T_c^k$ and $\alpha$ commute as elements of $\mathcal I(\Sigma_{g,*})$. By Propositions \ref{CRS3}, \ref{CRSmulti}, and \ref{lemma:pushfacts}.\ref{item:pushCRS}, $\CRS(\alpha) = \partial(S_\alpha)$ and $\CRS(T_c^k) = \{c\}$ must be disjoint, and moreover 
	\[
	\{c\} \subset \Sigma_{g,*} \setminus S_\alpha.
	\]
	The result follows. 
	\end{proof}
	
	\subsection{Lifts of some special mapping classes} The foundation of the proof of Theorem \ref{theorem:main} is an analysis of the possible images of bounding pair maps and separating twists under a hypothetical section. Let $\Gamma \le \mathcal I(\Sigma_g)$ be a finite-index subgroup, and suppose that $\sigma: \Gamma \to \Mod(\Sigma_{g,*})$ is a section. A first observation is that in fact, $\sigma(\Gamma) \le \mathcal I(\Sigma_{g,*})$. This follows readily from the fact that $\pi_1(\Sigma_g) \le \mathcal I(\Sigma_{g,*})$ as observed in Proposition \ref{lemma:pushfacts}.\ref{item:torelli}.
	
	Since $\Gamma$ is a finite-index subgroup of $\mathcal I(\Sigma_g)$, there is no assumption that a given separating twist $T_c$ or bounding pair map $T_aT_b^{-1}$ is an element of $\Gamma$. However, the assumption that $\Gamma$ is of finite index in $\mathcal I(\Sigma_g)$ does imply that for each separating twist $T_c$, and each bounding pair map $T_a T_b^{-1}$, there is some $k > 0$ (depending on the individual element) such that $T_c^k \in \Gamma$, and likewise $T_a^k T_b^{-k} \in \Gamma$.

In the following lemma and throughout, for a curve $\tilde c$ on $\Sigma_{g,1}$ (resp. $\Sigma_{g,*}$), when we say $\tilde c$ is isotopic to a curve $c$ on $\Sigma_g$, we mean that $\tilde c$ is isotopic to $c$ after forgetting the puncture (resp. boundary component).
	
	\begin{lemma}\label{lemma:liftfacts}\ 
	\begin{enumerate}
	\item \label{item:BP} Let $\{a,b\}$ be a bounding pair, and fix $k>0$ such that $(T_aT_b^{-1})^k\in \Gamma$. Up to a swap of $a$ and $b$, we have that $\sigma((T_aT_b^{-1})^k)=(T_{a'}T_{b'}^{-1})^k(T_{a'}^{-1}T_{a''})^{n}$, where $n$ is an integer and $a',a'',b'$ are three disjoint curves on $\Sigma_{g,1}$ such that $a',a''$ are isotopic to $a$ and $b'$ is isotopic to $b$. Notice that $n$ can be zero.
	
	\item \label{item:sep} Let $c$ be a separating curve on $\Sigma_g$ that divides $\Sigma_g$ into two subsurfaces each of genus at least two. For any $k >0$ such that $(T_c)^k\in \Gamma$, we have that $\sigma((T_c)^k)=(T_{c'})^{k}(T_{c'}^{-1}T_{c''})^{n}$ where $n$ is an integer and $c'$ and $c''$ are a pair of curves on $\Sigma_{g,1}$ that are both isotopic to $c$.
	\end{enumerate}
	\end{lemma}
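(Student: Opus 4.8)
The plan is to prove both items by one argument: first show that $\sigma$ carries each of these mapping classes to an honest Dehn multitwist, and then identify the curves and exponents by projecting the resulting identity down to $\Sigma_g$. We phrase things in $\Mod(\Sigma_{g,*})$ (the curves that appear may equally be drawn on $\Sigma_{g,1}$). Since $\sigma$ is a section, $\sigma((T_aT_b^{-1})^k)$ projects to $(T_aT_b^{-1})^k$, so fixing disjoint curves $a',b'$ on $\Sigma_{g,*}$ isotopic to $a,b$ we have
\[
\sigma\big((T_aT_b^{-1})^k\big) = (T_{a'}T_{b'}^{-1})^k\, P
\]
for a point-push $P \in \pi_1(\Sigma_g)$, and likewise $\sigma(T_c^k) = T_{c'}^k\, Q$ with $c'$ isotopic to $c$ and $Q \in \pi_1(\Sigma_g)$. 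By Proposition \ref{lemma:pushfacts}.\ref{item:pushCRS}, $P$ is pseudo-Anosov on its support $S_P$, with $\CRS(P) = \partial S_P$.

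The crux is to show $S_P$ (resp.\ $S_Q$) is an annulus or trivial, equivalently that $\sigma((T_aT_b^{-1})^k)$ and $\sigma(T_c^k)$ are Dehn multitwists. Decompose $\Sigma_g$ along $a\cup b$ (resp.\ along $c$) into $A$ and $B$: in the bounding-pair case $g_A,g_B \ge 1$ with $g_A+g_B=g-1$, and in the separating case $g_A,g_B \ge 2$; either way, for $g\ge 4$, at least one side, say $A$, has $g_A\ge 2$. Any element of $\mathcal I(A,\partial A)$, extended by the identity, lies in $\mathcal I(\Sigma_g)$ and commutes with $T_aT_b^{-1}$ (resp.\ $T_c$), so for a finite-index subgroup $\Gamma_A\le\mathcal I(A,\partial A)$ the image $\sigma(\Gamma_A)$ commutes with $\sigma((T_aT_b^{-1})^k)$ (resp.\ $\sigma(T_c^k)$). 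Suppose this element is not a Dehn multitwist; then its Nielsen--Thurston decomposition has a pseudo-Anosov piece on a subsurface $W$ of positive complexity. Since $\sigma(\Gamma_A)$ preserves $\CRS(\sigma((T_aT_b^{-1})^k))\supseteq\partial W$, a finite-index subgroup $\Gamma_A'$ restricts to give a homomorphism $\sigma(\Gamma_A')\to\Mod(W)$ with image in the centralizer of a pseudo-Anosov, which is virtually cyclic by Proposition \ref{PAcentral}; its kernel is supported on $\Sigma_{g,*}\setminus W$. But $\sigma(\Gamma_A')\cong\Gamma_A'$ is finite-index in $\mathcal I(A,\partial A)$ with $g_A\ge 2$, hence too large to be virtually confined to the complement of a positive-complexity subsurface, and one derives a contradiction by comparing complexities, using additivity of $\xi=3\,\mathrm{genus}-3+(\#\,\mathrm{boundary})$ under cutting. \emph{Making this complexity comparison precise is the main obstacle.} When both $g_A,g_B\ge 2$ (always so in the separating case), one has the extra leverage that $\sigma(\Gamma_A)$, $\sigma(\Gamma_B)$, and the pseudo-Anosov piece occupy pairwise-disjoint subsurfaces, forcing $g_A+g_B+\mathrm{genus}(W)\le g$, which in the separating case ($g_A+g_B=g$) immediately makes $W$ planar; alternatively one can try to pin down the push factor directly, using that its canonical reduction system must be disjoint from those of the $\sigma$-images of all bounding pair maps supported in the complement of $a\cup b$.

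Granting that $\sigma((T_aT_b^{-1})^k)=\prod T_{d_i}^{m_i}$ is a Dehn multitwist, projecting to $\Sigma_g$ sends it to $T_a^kT_b^{-k}$; since the curves and exponents of a reduced multitwist expression are determined, and curves bounding once-punctured disks are trivial in $\Mod(\Sigma_{g,*})$, each $d_i$ is isotopic on $\Sigma_g$ to $a$ or to $b$, with the $a$-exponents summing to $k$ and the $b$-exponents to $-k$. Among pairwise-disjoint curves on $\Sigma_{g,*}$ all isotopic to $a$ on $\Sigma_g$, at most two are pairwise non-isotopic on $\Sigma_{g,*}$ --- a third would lie in an annulus cobounded by two of them --- and the puncture must separate the two non-isotopic copies; taking disjoint annular neighborhoods of $a$ and of $b$ then shows one cannot simultaneously have two non-isotopic $a$-copies and two non-isotopic $b$-copies. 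Hence, after possibly swapping $a$ and $b$, the support is $\{a',a'',b'\}$ with $a',a''$ isotopic to $a$ and $b'$ isotopic to $b$, and, writing $n:=m_{a''}$ and matching exponents, $\sigma((T_aT_b^{-1})^k)=(T_{a'}T_{b'}^{-1})^k(T_{a'}^{-1}T_{a''})^n$, which is \eqref{item:BP}. Item \eqref{item:sep} is the same argument, shorter: $\sigma(T_c^k)$ is a multitwist projecting to $T_c^k$, hence supported on at most two non-isotopic curves isotopic to $c$, giving $\sigma(T_c^k)=T_{c'}^k(T_{c'}^{-1}T_{c''})^n$.
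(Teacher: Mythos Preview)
Your argument has a genuine gap, and you flag it yourself: the ``complexity comparison'' that is supposed to rule out a pseudo-Anosov piece $W$ is never made precise. Saying that a finite-index subgroup of $\mathcal I(A,\partial A)$ is ``too large to be virtually confined to the complement of a positive-complexity subsurface'' is exactly the nontrivial assertion that needs proof, and the additivity of $\xi$ does not by itself bound the complexity of a subsurface supporting a homomorphic image of a Torelli group. There is also a smaller issue earlier: the asserted equivalence ``$S_P$ is an annulus or trivial $\Leftrightarrow$ $\sigma((T_aT_b^{-1})^k)$ is a Dehn multitwist'' is not justified; the curves $a',b'$ you fixed at the outset have no reason to be disjoint from $\alpha_L,\alpha_R$, so neither implication is automatic. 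In practice you abandon $P$ and work directly with the Nielsen--Thurston decomposition of $\sigma((T_aT_b^{-1})^k)$, which is the right move, but then the complexity gap is the whole argument.

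The paper closes this gap by replacing your qualitative ``$\Gamma_A$ is large'' with a concrete abelian-rank count. The centralizer of $(T_aT_b^{-1})^k$ in $\mathcal I(\Sigma_g)$ contains $\Z^{2g-3}$ (coming from disjoint bounding pairs and separating twists), hence so does the centralizer of $\sigma((T_aT_b^{-1})^k)$. This immediately excludes a global pseudo-Anosov. To pin down $\CRS(\sigma((T_aT_b^{-1})^k))$, the paper does not try to bound complexities; instead it tests against auxiliary separating twists $T_d$ with $d$ disjoint from $a,b$: since $\sigma(T_d^m)$ commutes with $\sigma((T_aT_b^{-1})^k)$, any curve in the CRS not isotopic to $a$ or $b$ would have to be fixed by a power of $T_d$ for \emph{every} such $d$, which is impossible. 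The same abelian-rank trick (now the centralizer restricted to a complementary component would be virtually abelian of rank $\le 3 < 2g-3$) then forces both $a$- and $b$-curves to appear and rules out pseudo-Anosov pieces on the complementary components. Your final paragraph, extracting the explicit multitwist form once one knows it is a multitwist, is correct and matches the paper's conclusion.
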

	\begin{proof}
		Let $(T_aT_b^{-1})^k\in \Gamma$ be a power of a bounding pair map. Since the centralizer of $(T_aT_b^{-1})^k$ contains a copy of $\mathbb{Z}^{2g-3}$ as a subgroup of $\mathcal I(\Sigma_g)$, the centralizer of $(T_aT_b^{-1})^k$ as a subgroup of $\Gamma$ contains a copy of $\mathbb{Z}^{2g-3}$ as well. By the injectivity of $\sigma$, the centralizer of $\sigma(T_aT_b^{-1})\in \mathcal I(\Sigma_{g,*})$ contains a copy of $\mathbb{Z}^{2g-3}$. When $g>3$, we have that  $2g-3>3$. Therefore $\sigma((T_aT_b^{-1})^k)\in \mathcal I(\Sigma_{g,*})$ cannot be pseudo-Anosov because the centralizer of a pseudo-Anosov element is a virtually cyclic group by Proposition \ref{PAcentral}. For any curve $\gamma'$ on $\Sigma_{g,*}$, denote by $\gamma$ the same curve on $\Sigma_g$. We decompose the proof into the following three steps.
		
		\begin{claim}[\bf Step 1]
			CRS$(\sigma((T_aT_b^{-1})^k))$  only contains curves that are isotopic to $a$ or $b$.
		\end{claim}
		\begin{proof}
			
			Suppose that there exists $\gamma' \in \CRS(\sigma((T_aT_b^{-1})^k))$ such that $\gamma$ is not isotopic to $a$ or $b$. There are two cases.
			
			{\bf Case 1: $\gamma$ intersects $a$ or $b$.} Since a power of $\sigma((T_aT_b^{-1})^k)$ fixes $\gamma'$, a power of $(T_aT_b^{-1})^k$ fixes $\gamma$. On the other hand, CRS$((T_aT_b^{-1})^k)=\{a,b\}$. Combined with Lemma \ref{noin}, this shows that $(T_aT_b^{-1})^k$ does not fix $\gamma$. This is a contradiction.
			
			{\bf Case 2: $\gamma$ does not intersect $a$ and $b$.} In this case by the change-of-coordinates principle, there exists a separating curve $c$ on $\Sigma_g$ such that $i(a,c)=0$, $i(b,c)=0$ and $i(c,\gamma)\neq 0$.  Assume that $T_c^m\in \Gamma$. Since $(T_aT_b^{-1})^k$ and $T_c^m$ commute in $\Gamma$, the two mapping classes $\sigma((T_aT_b^{-1})^k)$ and $\sigma(T_c^m)$ commute in $\mathcal I(\Sigma_{g,*})$. Therefore a power of $\sigma(T_c^m)$ fixes CRS($\sigma(T_aT_b^{-1})$); more specifically a power of $T_c^m$ fixes $\gamma$. However by Lemma \ref{noin}, no power of $T_c$ fixes $\gamma$. This is a contradiction.
		\end{proof}
		
		\begin{claim}[\bf Step 2]
	CRS$(\sigma((T_aT_b^{-1})^k))$ must contain curves $a'$ and $b'$ that are isotopic to $a$ and $b$, respectively.
		\end{claim}
		\begin{proof}
			Suppose that CRS$(\sigma((T_aT_b^{-1})^k))$ does not contain a curve $a'$ isotopic to $a$. Then by Step 1, CRS$(\sigma((T_aT_b^{-1})^k))$ either contains one curve $b'$ isotopic to $b$ or two curves $b'$ and $b''$ both isotopic to $b$. After cutting $\Sigma_{g,*}$ along CRS$(\phi((T_aT_b^{-1})^k))$, there is some component $C$ that is not a punctured annulus. $C$ is homeomorphic to the complement of $b$ in $\Sigma_g$.
			
By the Nielsen-Thurston classification, a power of $\sigma((T_aT_b^{-1})^k)$ is either pseudo-Anosov on $C$ or else is the identity on $C$. If a power of $\sigma((T_aT_b^{-1})^k)$ is pseudo-Anosov on $C$,  then the centralizer of $\sigma((T_aT_b^{-1})^{k})|_C$ is virtually cyclic by Proposition \ref{PAcentral}. Combining with $T_{b'}$ and $T_{b''}$,  the centralizer of $\sigma((T_aT_b^{-1})^k)$ in $\mathcal I(\Sigma_{g,*})$ is virtually an abelian group of rank at most $3$. This contradicts the fact that the centralizer of $\sigma((T_aT_b^{-1})^k)$ contains a subgroup $\mathbb{Z}^{2g-3}$, since $g \ge 4$ and hence  $2g-3>3$. Therefore $\sigma((T_aT_b^{-1})^k)$ is the identity on $C$. However, viewing $C = \Sigma_g \setminus \{b\}$ as a subsurface of $\Sigma_g$ that contains $a$, we see that $(T_aT_b^{-1})^k$ is actually not the identity on $C$; this is a contradiction.
		\end{proof}
		
		\begin{claim}[\bf Step 3]
			$\sigma((T_aT_b^{-1})^k)=(T_{a'}T_{b'}^{-1})^k(T_{a'}^{-1}T_{a''})^{n}$, where $n$ is an integer and $a',a'',b'$ are three disjoint curves on $\Sigma_{g,*}$ such that $a',a''$ are isotopic to $a$ and $b'$ is isotopic to $b$. 		\end{claim}
		\begin{proof}
			Suppose that $\sigma((T_aT_b^{-1})^k)$ is pseudo-Anosov on some component $C$ of 
			\[
			\Sigma_{g,*} \setminus \CRS(\sigma((T_aT_b^{-1})^k))
			\]
			Since the genus $g(C)\ge 1$, there exists a separating curve $s$ on $C$ such that $\sigma(T_s^m)$ commutes with $\sigma((T_aT_b^{-1})^k)$ in $\sigma(\Gamma)$. Therefore, some power of $\sigma((T_aT_b^{-1})^k)$ fixes CRS($\sigma(T_s^m))$, which is either one curve or two curves isotopic to $s$. Thus a power of $\sigma((T_aT_b^{-1})^k)$ fixes some curve on $C$, which means that $\sigma((T_aT_b^{-1})^k)$ is not pseudo-Anosov on $C$. It follows that a power of $\sigma((T_aT_b^{-1})^k)$ must be a product of Dehn twists about the curves in CRS$(\sigma((T_aT_b^{-1})^k))$. Since $\sigma((T_aT_b^{-1})^k)$ is a lift of $(T_aT_b^{-1})^k$, the lemma holds.
		\end{proof}
		The same argument works for $T_c^m\in \Gamma$ the Dehn twist about a separating curve $c$ as long as both components of $\Sigma_g\setminus \{c\}$ have genus two or greater.			
		\end{proof}
		
\subsection{The handle-pushing subgroup}
As in Mess's approach, we will prove Theorem \ref{theorem:main} by showing that certain ``handle-pushing'' subgroups (contained in any finite-index subgroup of $\mathcal I(\Sigma_g)$) do not admit sections to $\mathcal I(\Sigma_{g,*})$. To define these, let $c$ be a separating curve. The complement $\Sigma_g\setminus\{c\}=\Sigma_{p,1}\cup \Sigma_{q,1}$ is a disconnected surface with two components. Let $\mathcal I(c)\le \mathcal I(\Sigma_g)$ be the subgroup consisting of Torelli mapping classes that are a product of mapping classes with supports on either $\Sigma_{p,1}$ or $\Sigma_{q,1}$. The subgroup $\mathcal I(c)$ satisfies the following exact sequence:
	\[
	1\to \mathbb{Z}\to \mathcal I(\Sigma_{p,1})\times \mathcal I(\Sigma_{q,1})\to \mathcal I(c)\to 1,
	\]
	where $\mathbb{Z}$ is generated by $T_c$.

\begin{definition}[Handle-pushing subgroup]
Let $c$ be a separating curve as in Figure \ref{figure:handlepush}, dividing $\Sigma_g \setminus \{c\} = \Sigma_{p,1} \cup \Sigma_{q,1}$. The {\em handle-pushing subgroup} on $\Sigma_{p,1}$, written $\mathcal H(\Sigma_{p,1})$, is defined as
\[
\mathcal H(S) := \pi_1( UT\Sigma_{p}) \le \mathcal I(c).
\]
More broadly, any finite-index subgroup of $\mathcal H(\Sigma_{p,1})$ will also be called a handle-pushing subgroup.
\end{definition}

\begin{figure}[h]
\labellist
\small
\pinlabel $c$ [b] at 36 44.8
\pinlabel $\Sigma_{p,1}$ [tl] at 230.4 12.8
\pinlabel $\Sigma_{q,1}$ [br] at 19.2 110.4
\pinlabel $\gamma$ [tr] at 56.8 39.2
\pinlabel $\gamma_L$ [t] at 101.6 51.4
\pinlabel $\gamma_R$ [tr] at 56.8 20
\endlabellist
\includegraphics{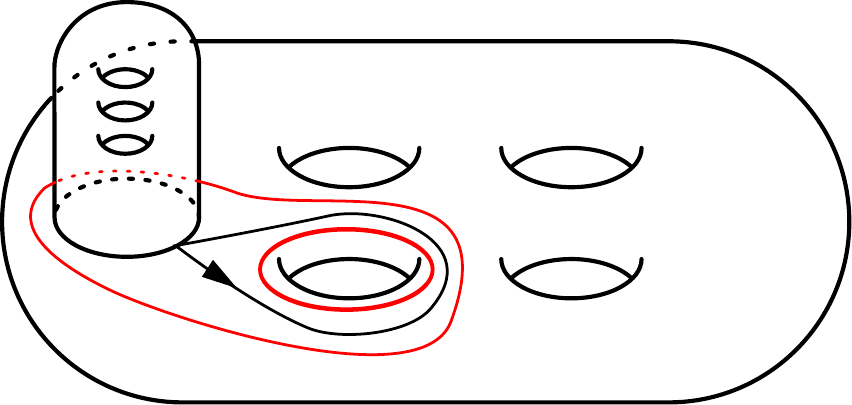}
\caption{An element of a handle-pushing subgroup.}
\label{figure:handlepush}
\end{figure}

\begin{remark}\label{remark:Hprops}
Every finite-index subgroup of $\mathcal H(\Sigma_{p,1})$, being isomorphic to a finite-index subgroup of $\pi_1 (UT\Sigma_{p})$, is isomorphic to a non-split extension of a surface group of genus $p' \ge p$ by $\Z$. 
\end{remark}
	
	Denote by  $A\le \mathcal I(c)$ the group generated by the disk-pushing subgroup on both subsurfaces $\Sigma_{p,1}$ and $\Sigma_{q,1}$. Then $A$ satisfies the following exact sequence:
	\begin{equation}
	1\to \mathbb{Z}\to \pi_1(UT\Sigma_p)\times \pi_1(UT\Sigma_q)\xrightarrow{p} A\to 1
	\label{UTB}
	\end{equation}

	\begin{lemma}
		The exact sequence \eqref{UTB} does not virtually split.
		\label{UTBNS}
	\end{lemma}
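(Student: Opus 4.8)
The plan is to recognize the sequence \eqref{UTB} as a central extension of $A$ by $\Z$ whose Euler class has nonzero image in $H^2(A;\R)$; a virtual splitting would force that class to vanish on a finite-index subgroup, which the transfer forbids.

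First I would unwind the structure of $A$. Let $\zeta_p\in\pi_1(UT\Sigma_p)$ and $\zeta_q\in\pi_1(UT\Sigma_q)$ generate the centers; we are in the range $p,q\ge 2$ relevant to Theorem~\ref{theorem:main}, where these are infinite cyclic. By Proposition~\ref{lemma:pushfacts} both $\zeta_p$ and $\zeta_q$ map to the separating twist $T_c$ in $\mathcal I(\Sigma_g)$, so the kernel $K$ of $p$ in \eqref{UTB} is the central infinite cyclic subgroup generated by $(\zeta_p,\zeta_q^{-1})$, and \eqref{UTB} is a central extension of $A$ by $K\cong\Z$. The images of $\pi_1(UT\Sigma_p)$ and $\pi_1(UT\Sigma_q)$ in $A$ commute, generate $A$, and meet in a common central $\Z=\langle\zeta\rangle$ (the common image of $\langle\zeta_p\rangle$ and $\langle\zeta_q\rangle$). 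Hence each is normal in $A$, with quotients $A/\pi_1(UT\Sigma_q)\cong\pi_1(\Sigma_p)$ and $A/\pi_1(UT\Sigma_p)\cong\pi_1(\Sigma_q)$, and $A$ is itself a central extension $1\to\langle\zeta\rangle\to A\xrightarrow{\rho}\bar B\to 1$ with $\bar B:=\pi_1(\Sigma_p)\times\pi_1(\Sigma_q)$. Write $\tau_p=\mathrm{pr}_1\circ\rho$ and $\tau_q=\mathrm{pr}_2\circ\rho$ for the two maps to $\pi_1(\Sigma_p)$ and $\pi_1(\Sigma_q)$.

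The crux is that, as an extension of $A$ by $\Z$, the sequence \eqref{UTB} is isomorphic to the pullback along $\tau_p$ of the disk-pushing Birman sequence $1\to\Z\to\pi_1(UT\Sigma_p)\to\pi_1(\Sigma_p)\to 1$; the isomorphism is $B\to A\times_{\pi_1(\Sigma_p)}\pi_1(UT\Sigma_p)$, $(x,y)\mapsto(p(x,y),x)$, and the content is to check that this is indeed an isomorphism carrying $K$ onto the coefficient $\Z$ of the pullback. It follows that the Euler class $\epsilon\in H^2(A;\Z)$ of \eqref{UTB} equals, up to sign, $\tau_p^*(\chi_p)$, where $\chi_p\in H^2(\pi_1(\Sigma_p);\Z)\cong\Z$ is the Euler class of $UT\Sigma_p$, namely $\chi(\Sigma_p)=2-2p\ne 0$ times a generator; the mirror construction gives $\epsilon=\pm\tau_q^*(\chi_q)$ with $\chi_q$ corresponding to $2-2q\ne 0$. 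Let $\epsilon_\R$ be the image of $\epsilon$ in $H^2(A;\R)$ and set $e_p:=\mathrm{pr}_1^*(\chi_p)$, $e_q:=\mathrm{pr}_2^*(\chi_q)$ in $H^2(\bar B;\R)$, so that $\epsilon_\R=\pm\rho^*(e_p)=\pm\rho^*(e_q)$. By the Künneth formula $e_p$ and $e_q$ are nonzero and lie in different direct summands of $H^2(\bar B;\R)$, hence are linearly independent. If $\epsilon_\R=0$, then $\rho^*(e_p)=0$ and $\rho^*(e_q)=0$, so $e_p$ and $e_q$ both lie in $\ker\bigl(\rho^*\colon H^2(\bar B;\R)\to H^2(A;\R)\bigr)$; but the five-term exact sequence of the central extension $1\to\langle\zeta\rangle\to A\xrightarrow{\rho}\bar B\to 1$ (with $\R$-coefficients) identifies that kernel with the image of the transgression $H^1(\langle\zeta\rangle;\R)\cong\R\to H^2(\bar B;\R)$, which has dimension at most one. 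This contradicts linear independence, so $\epsilon_\R\ne 0$.

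Finally, if \eqref{UTB} virtually split over a finite-index subgroup $A'\le A$, then $1\to K\to p^{-1}(A')\to A'\to 1$ would be a central extension admitting a section, hence with trivial class $\mathrm{res}^A_{A'}(\epsilon)=0$, and therefore $\mathrm{res}^A_{A'}(\epsilon_\R)=0$; but since $[A:A']<\infty$ the restriction $H^2(A;\R)\to H^2(A';\R)$ is injective by the transfer, forcing $\epsilon_\R=0$, a contradiction. The step I expect to be most delicate is the pullback identification: one must verify not merely that $B\cong A\times_{\pi_1(\Sigma_p)}\pi_1(UT\Sigma_p)$ but that the isomorphism matches the distinguished infinite cyclic subgroups on the two sides, so that $\epsilon$ really is $\pm\tau_p^*(\chi_p)$; the remaining cohomological input (Künneth, the five-term sequence, and the transfer) is routine.
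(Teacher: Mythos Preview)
Your argument is correct, and the pullback identification you flag as delicate checks out: the map $(x,y)\mapsto(p(x,y),x)$ is easily seen to be a bijection onto $A\times_{\pi_1(\Sigma_p)}\pi_1(UT\Sigma_p)$, and it sends the generator $(\zeta_p,\zeta_q^{-1})$ of $K$ to $(1,\zeta_p)$, which is exactly the coefficient $\Z$ of the pullback extension; the mirror map sends it to $(1,\zeta_q^{-1})$, accounting for the sign discrepancy that your argument in any case does not need.

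Both the paper and you reduce to showing that the Euler class of \eqref{UTB} is nonzero in $H^2(A;\Q)$ and then invoke the transfer, but the two nonvanishing arguments are genuinely different. The paper observes that $Eu(\alpha)\ne 0$ rationally if and only if $b_1(\tilde A)=b_1(A)$, and verifies this by a quick Betti-number squeeze: $b_1(\pi_1(UT\Sigma_p)\times\pi_1(UT\Sigma_q))=b_1(\pi_1(\Sigma_p)\times\pi_1(\Sigma_q))$ since $p,q\ge 2$, and the surjection $A\to\pi_1(\Sigma_p)\times\pi_1(\Sigma_q)$ forces equality. Your route instead identifies $\epsilon$ explicitly as $\pm\tau_p^*(\chi_p)=\pm\tau_q^*(\chi_q)$ and then argues that vanishing would place the linearly independent classes $e_p,e_q$ in the at-most-one-dimensional kernel $\mathrm{im}(d_2)\subset H^2(\bar B;\R)$. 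The paper's proof is shorter and more self-contained; yours is more geometric, making transparent that the obstruction is literally the Euler class of the tangent bundle of one (equivalently, either) side, which is a nice conceptual payoff.
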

	\begin{proof}
		This will be proved via group cohomology. For a $\mathbb{Z}$-central extension of a group $T$ 
		\begin{equation}
		1\to \mathbb{Z}\to \widetilde{T}\xrightarrow{\alpha} T\to 1,
		\label{gce}
		\end{equation}
		there is an associated {\em Euler class} $Eu(\alpha)\in H^2(T;\mathbb{Z})$. The extension $\alpha$ splits if and only if $Eu(\alpha)$ vanishes; see \cite[Chapter 4.3]{brown}. $Eu(\alpha)$ can be constructed using the Lyndon-Hochschild-Serre spectral sequence of \eqref{gce}, by taking $Eu(\alpha)=d_2(1)$. Here $d_2$ is the differential $d_2:\mathbb{Z}\to H^2(T;\mathbb{Z})$ on the $E_2$ page. The (rational) Betti number $b_1(\tilde T)$ can be computed from the spectral sequence as
		\[
		b_1(\widetilde{T})=b_1(T)+\dim(\ker( d_2)).
		\] 
		Therefore $Eu(\alpha)\ne 0$ is nonvanishing if and only if $b_1(\widetilde{T})=b_1(T)$.
		
		Let $A'\xrightarrow{i}A$ be a finite-index subgroup of $A$. Let $\widetilde{A'}=p^{-1}(A')$. The goal is to prove that the top row of the diagram 
		\begin{equation}
		\xymatrix{
		1\ar[r] 
		&\mathbb{Z}\ar[r]\ar[d] &\widetilde{A'}\ar[r]^{\beta}\ar[d] &A'\ar[r]\ar[d] &1\\
			1\ar[r]& \mathbb{Z}\ar[r]&\widetilde{A}\ar[r]^{\alpha} &A\ar[r] &1}
			\label{eu}
		\end{equation}
		does not split. It suffices to show that $Eu(\beta)\neq 0\in H^2(A';\mathbb{Q})$.  By the theory of the transfer homomorphism,
		\[
		i^*:H^2(A;\mathbb{Q})\to H^2(A';\mathbb{Q})
		\]
		 is injective. By construction, $Eu(\beta)=i^*(Eu(\alpha))$. Therefore it suffices to establish that 
		 \[
		 Eu(\alpha)\neq 0\in H^2(A;\mathbb{Q}).
		 \] 
		 By the above discussion, we only need to show that $b_1(A)=b_1(\pi_1(UT\Sigma_p)\times \pi_1(UT\Sigma_q))$. However, since $p \ge 2$ and $q \ge 2$ by assumption, 
		 \[
		 b_1(\pi_1(UT\Sigma_p)\times \pi_1(UT\Sigma_q))=b_1(\pi_1(\Sigma_p)\times \pi_1(\Sigma_q)).
		 \]
		  Since $A\to  \pi_1(\Sigma_p)\times \pi_1(\Sigma_q)$ is surjective, it follows that $b_1(A)\ge b_1(\pi_1(\Sigma_p)\times \pi_1(\Sigma_q))$, and so $b_1(A)=b_1(\pi_1(UT\Sigma_p)\times \pi_1(UT\Sigma_q))$ as desired.
		  \end{proof}
	
As a corollary, we can refine the analysis of $\sigma(T_c^k)$ for $T_c$ a separating twist, as begun in Lemma \ref{lemma:liftfacts}.\ref{item:sep}. 
	
	\begin{lemma}\label{lemma:seplift}
		Let $c \subset \Sigma_g$ be a separating curve such that each component of $\Sigma_g \setminus\{c\}$ has genus at least $2$, and let $k>0$ be such that $T_c^k\in \Gamma$. Then there exists a curve $\tilde c \subset \Sigma_{g,*}$ isotopic to $c$ such that $\sigma(T_c^k)=T_{\tilde c}^k$.
	\end{lemma}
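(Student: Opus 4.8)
The plan is to bootstrap from Lemma \ref{lemma:liftfacts}.\ref{item:sep}, which already tells us that $\sigma(T_c^k)=T_{c'}^{k}(T_{c'}^{-1}T_{c''})^{n}=T_{c'}^{\,k-n}T_{c''}^{\,n}$ for some $n\in\Z$ and curves $c',c''\subset\Sigma_{g,*}$, each isotopic to $c$ after filling the puncture. If $c'=c''$ the lemma holds with $\tilde c=c'$, and if $n\in\{0,k\}$ it holds with $\tilde c=c'$ or $\tilde c=c''$. So everything comes down to excluding the case $c'\neq c''$ and $n\notin\{0,k\}$. In that case $c'$ and $c''$ are disjoint, cobound a pair of pants $P\subset\Sigma_{g,*}$ containing $*$, and $\Sigma_{g,*}=\Sigma_{p,1}'\cup_{c'}P\cup_{c''}\Sigma_{q,1}'$ with $\Sigma_{p,1}'$ (resp. $\Sigma_{q,1}'$) a copy of the genus-$p$ (resp. genus-$q$) side of $c$; and since both exponents $k-n$ and $n$ are nonzero, Proposition \ref{CRSmulti} gives $\CRS(\sigma(T_c^k))=\{c',c''\}$.

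To get a contradiction I would restrict $\sigma$ to the handle-pushing subgroup. Set $H:=\mathcal H(\Sigma_{p,1})\cap\Gamma$; after shrinking it to a finite-index subgroup (harmless by Remark \ref{remark:Hprops}) I may assume $T_c^k\in H$ and that $H$ is a non-split central extension $1\to\Z\langle T_c^k\rangle\to H\to\Pi\to1$ of a closed surface group $\Pi$. A standard five-term-sequence computation (equivalently the Euler-class discussion in the proof of Lemma \ref{UTBNS}) then shows the image of $\langle T_c^k\rangle$ in $H_1(H;\Q)$ is zero, so \emph{every} homomorphism $H\to\Z$ kills $T_c^k$; I will contradict this by producing a homomorphism $H\to\Z$ sending $T_c^k$ to $n\neq0$. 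Since $\sigma(H)$ centralizes $\sigma(T_c^k)$, it preserves $\CRS(\sigma(T_c^k))=\{c',c''\}$ as a set (cf. the proof of Proposition \ref{CRS3}); shrinking $H$ once more, $\sigma(H)$ lies in $Z:=\Stab(c')\cap\Stab(c'')\le\Mod(\Sigma_{g,*})$. Cutting along $c'$ and $c''$, and using that $\Mod(P,\partial)$ is generated by the two boundary twists $T_{c'},T_{c''}$, one identifies $Z\cong\Mod(\Sigma_{p,1}')\times\Mod(\Sigma_{q,1}')$; under this identification $\sigma(T_c^k)$ becomes $(T_{c'}^{\,k-n},T_{c''}^{\,n})$, and the Birman projection $\beta\colon\Mod(\Sigma_{g,*})\to\Mod(\Sigma_g)$ restricts on $Z$ to the quotient by the central $\Z$ generated by $(T_{c'},T_{c''}^{-1})$, i.e. by the point-push around the core of the annulus $\beta(P)$. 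Now $\beta(\sigma(H))=H\le\mathcal H(\Sigma_{p,1})=\pi_1(UT\Sigma_p)$, which sits inside $\Mod(\Sigma_g,c)$ as the disk-pushing subgroup on the $p$-side; pulling this back through $\beta|_Z$ shows $\sigma(H)\le\pi_1(UT\Sigma_p)\times\langle T_{c''}\rangle$, where the first factor is the disk-pushing subgroup of $\Mod(\Sigma_{p,1}')$. Hence the composite of $\sigma|_H$ with the projection $\theta_q\colon Z\to\Mod(\Sigma_{q,1}')$ takes values in $\langle T_{c''}\rangle\cong\Z$ and sends $T_c^k\mapsto T_{c''}^{\,n}$, i.e. to $n\neq0$ — the desired contradiction. (Symmetrically, projecting to $\Mod(\Sigma_{p,1}')$ and using $\mathcal H(\Sigma_{q,1})\cap\Gamma$ forces $k-n=0$; either contradiction suffices.)

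The one genuinely non-formal step is the geometric bookkeeping in the middle paragraph: identifying $Z$ with $\Mod(\Sigma_{p,1}')\times\Mod(\Sigma_{q,1}')$ and then tracking, through this identification, both the multitwist $\sigma(T_c^k)$ and the Birman projection $\beta$ — in particular pinning down $\ker(\beta|_Z)$ as the cyclic point-pushing subgroup around $\beta(P)$ — so that the image of $\sigma(H)$ is confined to $\pi_1(UT\Sigma_p)\times\langle T_{c''}\rangle$ and the factor $\theta_q$ strips off exactly $T_{c''}^{\,n}$. Everything else is routine: that $\CRS(\sigma(T_c^k))=\{c',c''\}$ and that $\sigma(H)$ virtually preserves it come from Propositions \ref{CRSmulti} and \ref{CRS3}; that a non-split $\Z$-central extension of a surface group carries no homomorphism to $\Z$ nontrivial on the center is the Euler-class computation underlying Lemma \ref{UTBNS} and Remark \ref{remark:Hprops}; and all of these properties persist under passage to finite-index subgroups. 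This is also where the hypothesis that each side of $c$ has genus at least $2$ enters — it is exactly what makes $\pi_1(UT\Sigma_p)$ and $\pi_1(UT\Sigma_q)$ non-split $\Z$-extensions.
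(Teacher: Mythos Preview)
Your argument is correct and follows the same underlying strategy as the paper: confine the image of $\sigma$ on a handle/disk-pushing subgroup using $\CRS(\sigma(T_c^k))=\{c',c''\}$, then derive a contradiction from the nonvanishing of the Euler class of a unit-tangent-bundle extension. The organization differs slightly. The paper works with the two-sided group $A\cap\Gamma$ (built from disk-pushing on both $\Sigma_{p,1}$ and $\Sigma_{q,1}$), observes that $\sigma(A\cap\Gamma)$ lands in the product of disk-pushing subgroups of $\Mod(\Sigma_{p,1}')\times\Mod(\Sigma_{q,1}')$, and reads this as a virtual splitting of the sequence~\eqref{UTB}, contradicting Lemma~\ref{UTBNS} directly. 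You instead work with the one-sided group $H=\mathcal H(\Sigma_{p,1})\cap\Gamma$, project $\sigma(H)$ to the $\Mod(\Sigma_{q,1}')$-factor to obtain a homomorphism $H\to\Z$ sending $T_c^k\mapsto n$, and contradict the fact that the center of $H$ dies in $H_1(H;\Q)$. Your route is marginally more elementary in that it only needs the nonsplitting of the single extension $1\to\Z\to\pi_1(UT\Sigma_p)\to\pi_1(\Sigma_p)\to 1$ (a textbook Euler-class computation), rather than the full Lemma~\ref{UTBNS}. Two small remarks: the second ``shrinking'' of $H$ is in fact unnecessary, since any $\sigma(h)$ swapping $c'$ and $c''$ would project under $\beta$ to an element of $H$ exchanging the two sides of $c$, which is impossible; and your identification of $\ker(\beta|_Z)$ with $\langle(T_{c'},T_{c''}^{-1})\rangle$ is exactly the point-push along the simple loop in $P$ freely homotopic to $c$, so that step is indeed straightforward.
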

	\begin{proof}
		If this is not the case, then $\sigma(T_c^k)=T_{c'}^lT_{c''}^m$ where $c',c''$ bound an annulus and $l\neq 0,m\neq 0$. Let $A$ be the subgroup constructed above, relative to the separating curve $c$. The image $\sigma(A\cap \Gamma)$ must be contained in the centralizer of $T_{c'}^lT_{c''}^m$. In particular, $\sigma(A\cap \Gamma)$ must be contained in the disk-pushing subgroups on the sides of $c'$ and $c''$ not bounding the annulus. This gives a virtual splitting of exact sequence \eqref{UTB}, contradicting Lemma \ref{UTBNS}.
		\end{proof}

\section{Proof of Theorem \ref{theorem:main}}\label{section:proof}

\para{Beginning the proof}  Let $\Gamma \le \mathcal I(\Sigma_g)$ be a subgroup of finite index, and suppose that $\sigma: \Gamma \to \mathcal I(\Sigma_{g,*})$ is a section. By the hypothesis that $g \ge 4$, there exists a separating simple closed curve $c \subset \Sigma_g$ that divides $\Sigma_g$ into subsurfaces $\Sigma_{p,1}$ and $\Sigma_{q,1}$ with $p,q \ge 2$. Let $T_c$ denote the corresponding Dehn twist. Choosing $k$ such that $T_c^k \in \Gamma$, Lemma \ref{lemma:seplift} asserts that $\sigma(T_c^k) = T_{\tilde c}^k$ for some separating curve $\tilde c \subset \Sigma_{g,*}$. Without loss of generality, we assume that the marked point $* \in \Sigma_{p,1}$. 

A standard argument using canonical reduction systems shows that $\sigma(\Mod(\Sigma_{p,1}) \cap \Gamma)$ is supported on the subsurface $\tilde{\Sigma_{p,1}} \cong \Sigma_{p,1,*}$ bounded by $\tilde c$. The Birman exact sequences for $\Sigma_{p,1}$ and $\Sigma_{p,1,*}$ (restricted to the Torelli group\footnote{The Torelli group is not unambiguously defined for a surface $\Sigma_{p,1,*}$. The meaning here of $\mathcal I(\Sigma_{p,1,*})$ is simply the full preimage $\pi^{-1}(\mathcal I(\Sigma_{p,1}))$.}) fit together in the following commutative diagram, where the group $\PB_{1,1}(\Sigma_p)$ and the homomorphism $p_{2,*}$ will be described below.
\begin{equation}\label{equation:BES}
\xymatrix{
1 \ar[r]	& \PB_{1,1}(\Sigma_p) \ar[r] \ar[d]_{p_{2,*}} 			& \mathcal I(\Sigma_{p,1,*}) \ar[r] \ar[d]_\pi				& \mathcal I(\Sigma_p) \ar@{=}[d] \ar[r]	&1\\
1 \ar[r]	& \pi_1(UT\Sigma_p) \ar[r] \ar@{-->}@/_1pc/[u]_{\sigma}	& \mathcal I(\Sigma_{p,1}) \ar[r] \ar@{-->}@/_1pc/[u]_{\sigma}	& \mathcal I(\Sigma_p) \ar[r]	&1
}
\end{equation}
The group $\PB_{1,1}(\Sigma_p)$ is defined as the fundamental group of the configuration space $\PConf_{1,1}(\Sigma_p)$, where
\[
\PConf_{1,1}(\Sigma_p) := \{(x,v) \mid x \in \Sigma_p,  v \in T^1_y(\Sigma_p),\ x \ne y \}.
\]
Here, $T^1_y(\Sigma_p)$ denotes the space of unit-length tangent vectors in the tangent space $T_y(\Sigma_p)$, relative to an arbitrarily-chosen Riemannian metric. Projection onto either factor realizes $\PConf_{1,1}(\Sigma_p)$ as a fibration in two ways:
\begin{equation}\label{equation:fibrations}
\xymatrix{
					& \Sigma_{p,*} \ar[d]							&	\\
UT(\Sigma_{p,*})\ar[r]	& \PConf_{1,1}(\Sigma_p) \ar[r]^-{p_1} \ar[d]_{p_2}	& \Sigma_p\\
					& UT(\Sigma_p)		
}
\end{equation}

\subsection{The map $s$} We now come to the central object of study in the argument. Let $\mathcal H = \mathcal H(\Sigma_{p,1}) \cap \Gamma$ denote the handle-pushing subgroup. Combining diagrams (\ref{equation:BES}) and (\ref{equation:fibrations}), we obtain a homomorphism
\begin{equation}\label{equation:tildes}
\tilde s := p_{1,*} \circ \sigma: \mathcal H \to \pi_1(\Sigma_p). 
\end{equation}
We will see that $\tilde s$ has paradoxical properties, leading to a contradiction that establishes the non-existence of the section $\sigma$. A first observation is that we can replace $\tilde s$ by a map between surface groups. Let $\varpi: \pi_1(UT\Sigma_p) \to \pi_1(\Sigma_p)$ denote the projection, and define $\overline{\mathcal H} := \varpi(\mathcal H)$. By construction $\overline{\mathcal H}$ is a finite-index subgroup of $\pi_1(\Sigma_p)$.  

\begin{lemma}\label{lemma:sbar}
There is a homomorphism
\begin{equation}\label{equation:s}
s: \overline{\mathcal H} \to \pi_1(\Sigma_p)
\end{equation}
such that $\tilde s$ factors as $\tilde s = s \circ \varpi$. 
\end{lemma}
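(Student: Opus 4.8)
The plan is to show that $\tilde s$ annihilates a finite-index subgroup of the fiber subgroup $\ker\varpi \le \pi_1(UT\Sigma_p)$, and then invoke the universal property of the quotient.

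\emph{Step 1: reduce to a single element.} The homomorphism $\varpi$ is the map on $\pi_1$ induced by the circle bundle $UT\Sigma_p \to \Sigma_p$; since $\Sigma_p$ is aspherical, the long exact homotopy sequence shows $\ker\varpi = \langle\zeta\rangle \cong \Z$, the infinite cyclic (and central) subgroup generated by the fiber class $\zeta$. As $\mathcal H$ has finite index in $\pi_1(UT\Sigma_p)$, the intersection $\mathcal H\cap\langle\zeta\rangle$ equals $\langle\zeta^a\rangle$ for some $a\ge1$, and $\varpi$ restricts to a surjection $\mathcal H\twoheadrightarrow\overline{\mathcal H}$ with kernel exactly $\langle\zeta^a\rangle$. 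Hence $\tilde s$ descends to a homomorphism $s\colon\overline{\mathcal H}\to\pi_1(\Sigma_p)$ satisfying $\tilde s = s\circ\varpi$ precisely when $\tilde s(\zeta^a)=1$, and this is what I will verify.

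\emph{Step 2: identify $\sigma(\zeta^a)$.} By Proposition \ref{lemma:pushfacts}(3), $\zeta$ is the boundary twist $T_{\partial\Sigma_{p,1}}$; under the inclusion $\Sigma_{p,1}\hookrightarrow\Sigma_g$ in which $\partial\Sigma_{p,1}$ becomes the chosen separating curve $c$, this identifies $\zeta$ with $T_c$. Thus $\zeta^a = T_c^a \in\mathcal H\subseteq\Gamma$, and since both complementary pieces of $c$ have genus $\ge 2$, Lemma \ref{lemma:seplift} gives $\sigma(T_c^a)=T_{\tilde c}^{\,a}$ where $\tilde c = \partial\tilde{\Sigma_{p,1}}$. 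Restricting to the subsurface $\tilde{\Sigma_{p,1}}\cong\Sigma_{p,1,*}$ on which $\sigma(\mathcal H)$ is supported, $\sigma(\zeta^a)$ becomes the $a$-th power of the boundary Dehn twist of $\Sigma_{p,1,*}$; since a boundary twist dies upon filling the boundary with a disk, this element lies in $\PB_{1,1}(\Sigma_p) = \ker(\mathcal I(\Sigma_{p,1,*})\to\mathcal I(\Sigma_p))$, so $p_{1,*}$ may be applied to it.

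\emph{Step 3: compute $p_{1,*}$ of the boundary twist.} It remains to show that $p_{1,*}$ sends the boundary twist $T_{\partial\Sigma_{p,1,*}}$ to $1\in\pi_1(\Sigma_p)$. Under the Birman correspondence, this twist is represented in $\PConf_{1,1}(\Sigma_p)=\{(x,v)\}$ by the loop $t\mapsto(H_t(x_0),dH_t(v_0))$, where $H_t$ is the standard isotopy of $\Sigma_p$ from the identity to $T_{\tilde c}$ that unwinds the twist by rotating once the disk bounded by $\tilde c$. This isotopy is supported in that disk together with an arbitrarily thin collar of $\tilde c$; since the marked point $x_0$ lies on the other side of $\tilde c$, inside $\tilde{\Sigma_{p,1}}$, we may take the support disjoint from $x_0$, so $H_t(x_0)\equiv x_0$. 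As $p_1$ is projection onto the $x$-coordinate, the loop $t\mapsto p_1(H_t(x_0),dH_t(v_0))$ is constant, and hence $p_{1,*}(T_{\partial\Sigma_{p,1,*}})=1$. (Equivalently: this loop lies in the fiber $UT(\Sigma_{p,*})$ of $p_1$ over $x_0$, hence in $\ker p_{1,*}$ by the long exact sequence of the fibration $p_1$.) Combining the three steps, $\tilde s(\zeta^a)=p_{1,*}(\sigma(\zeta^a))=p_{1,*}(T_{\partial\Sigma_{p,1,*}})^a=1$, which produces $s$.

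The main obstacle is Step 3: one must keep straight which of the two coordinates of $\PConf_{1,1}(\Sigma_p)$ the projection $p_1$ records — namely the marked point, not the framed handle — and confirm that the disk-unwinding isotopy can be chosen to fix the marked point. Steps 1 and 2 amount to bookkeeping with the circle bundle $UT\Sigma_p\to\Sigma_p$ and with Lemma \ref{lemma:seplift}.
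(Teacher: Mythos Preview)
Your proof is correct and follows essentially the same approach as the paper: identify $\ker(\varpi|_{\mathcal H})$ with powers of $T_c$, apply Lemma~\ref{lemma:seplift} to get $\sigma(T_c^a)=T_{\tilde c}^a$, and then observe that $p_{1,*}$ kills $T_{\tilde c}$. The only difference is cosmetic---the paper dispatches Step~3 by noting that $p_{1,*}$ is induced by the boundary-capping map $\Sigma_{p,1,*}\to\Sigma_{p,*}$ (which immediately kills the boundary twist), whereas you unpack this via the explicit loop in $\PConf_{1,1}(\Sigma_p)$; these are two descriptions of the same fact.
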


\begin{proof}
As noted in Remark \ref{remark:Hprops}, $\mathcal H$ has the structure of a cyclic central extension of a finite-index subgroup $\overline{\mathcal H} \le \pi_1(\Sigma_p)$. Viewed as a subgroup of $\mathcal I(\Sigma_{p,1})$, the center of $\mathcal H$ consists of elements of the form $T_c^k$. As already observed, $\sigma(T_c^k) = T_{\tilde c}^k$, where $\tilde c$ is the boundary of the subsurface $\Sigma_{p,1,*} \le \Sigma_{g,*}$. The map $p_{1,*}: \PB_{1,1}(\Sigma_p) \to \pi_1(\Sigma_p)$ is induced from the boundary-capping map $\Sigma_{p,1,*} \to \Sigma_{p,*}$. The result follows.
\end{proof}

The construction of $s$ allows us to continue the analysis of $\sigma$ begun in Lemma \ref{lemma:liftfacts}, giving a complete description of $\sigma$ on (powers of) bounding-pair maps. 

\begin{lemma}\label{lemma:BPlifts}
Let $a,b$ form a bounding pair on $\Sigma_g$. Then there exists a bounding pair $\tilde a, \tilde b$ on $\Sigma_{g,*}$ such that $\sigma(T_a^k T_b^{-k}) = T_{\tilde a}^k T_{\tilde b}^{-k}$ for any $k$ such that $T_a^k T_b^{-k} \in \Gamma$. 
\end{lemma}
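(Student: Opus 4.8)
The plan is to leverage Lemma \ref{lemma:liftfacts}.\ref{item:BP} together with the map $s$ of Lemma \ref{lemma:sbar} to pin down the ``extra'' integer $n$ appearing in the description of $\sigma((T_aT_b^{-1})^k)$ and show it must vanish. Recall from Lemma \ref{lemma:liftfacts}.\ref{item:BP} that, after possibly swapping $a$ and $b$, there exist disjoint curves $a',a'',b'$ on $\Sigma_{g,*}$ with $a',a''$ isotopic to $a$ and $b'$ isotopic to $b$, and an integer $n$, such that $\sigma((T_aT_b^{-1})^k) = (T_{a'}T_{b'}^{-1})^k(T_{a'}^{-1}T_{a''})^n$. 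The curves $a',a''$ cobound an annular region $R$ on $\Sigma_{g,*}$, and the factor $(T_{a'}^{-1}T_{a''})^n$ is a power of a bounding pair map supported in a neighborhood of $R$ (equivalently, a disk-push around the core of $R$). The content of the lemma is that $n=0$ and that $a'=\tilde a$, $b'=\tilde b$ may be taken to be an honest bounding pair with $\tilde a, \tilde b$ isotopic to $a,b$.

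First I would set up the relevant separating curve: since $g \ge 4$, choose a separating curve $c$ on $\Sigma_g$ with both sides of genus $\ge 2$ such that $a$ lies on (say) the $\Sigma_{p,1}$ side and $c$ is disjoint from both $a$ and $b$ — this is possible by the change-of-coordinates principle provided we cut along a subsurface containing the bounding pair, using that $g \ge 4$ gives enough room. Then $(T_aT_b^{-1})^k$ commutes with a suitable power $T_c^{k'}$, and by Lemma \ref{lemma:seplift}, $\sigma(T_c^{k'}) = T_{\tilde c}^{k'}$ for a separating curve $\tilde c$. Because $\sigma$ is a homomorphism, $\sigma((T_aT_b^{-1})^k)$ commutes with $T_{\tilde c}^{k'}$; by Lemmas \ref{CRS3} and \ref{CRSmulti} the curves $a',a'',b'$ (which constitute $\CRS(\sigma((T_aT_b^{-1})^k))$ once we know $n$'s role) are disjoint from $\tilde c$, so $\sigma((T_aT_b^{-1})^k)$ is supported on one side of $\tilde c$ — the side corresponding to $\Sigma_{p,1,*}$. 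This lets me run the argument ``downstairs'' inside $\mathcal I(\Sigma_{p,1,*})$, where the map $s: \overline{\mathcal H} \to \pi_1(\Sigma_p)$ from Lemma \ref{lemma:sbar} is available.

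The key step is to rule out $n \ne 0$. The factor $(T_{a'}^{-1}T_{a''})^n$, being a power of a bounding-pair/disk-push map about the annulus $R$, acts nontrivially on $\pi_1$ of the subsurface it bounds; pushing this through $p_{1,*}$ I would compute the effect on $\pi_1(\Sigma_p)$ and compare with what $\tilde s = s \circ \varpi$ forces. The point is that, on the relevant handle-pushing subgroup $\mathcal H$, the element $(T_aT_b^{-1})^k$ is realized by an actual point-push (Proposition \ref{lemma:pushfacts}.\ref{item:tildealpha} gives $T_{\alpha_L}T_{\alpha_R}^{-1}T_\Delta^k$ as a disk-push), and the commutation constraints with all the nearby separating twists $T_{c'}$ (for $c'$ ranging over separating curves disjoint from $a,b$) force $\CRS(\sigma((T_aT_b^{-1})^k))$ to be exactly $\{\tilde a, \tilde b\}$ with no annular curve $a''$ present — otherwise the centralizer computation as in Step 2 of Lemma \ref{lemma:liftfacts} (counting the $\Z^{2g-3}$ inside the centralizer against what a pseudo-Anosov-on-a-subsurface-plus-multitwist configuration can support) fails for $g \ge 4$. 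I expect the main obstacle to be exactly this last point: carefully showing that the presence of a nonzero $n$ is incompatible with the full collection of commutation relations coming from separating twists on both sides of the bounding pair, i.e. that one cannot hide an extra disk-push factor without either shrinking the centralizer below rank $2g-3$ or contradicting that $\sigma$ is a section (e.g. by composing with the forgetful map $\pi$ and checking the image is $(T_aT_b^{-1})^k$, which kills $T_\Delta$-type contributions but not the $(T_{a'}^{-1}T_{a''})^n$ factor — so the real work is the centralizer/CRS bookkeeping, not the homological check). Once $n=0$ is established, $\sigma((T_aT_b^{-1})^k) = T_{\tilde a}^k T_{\tilde b}^{-k}$ with $\{\tilde a, \tilde b\}$ a bounding pair by the usual fact that a Torelli multitwist of this form must bound, completing the proof; the statement for general $k$ with $T_a^kT_b^{-k}\in\Gamma$ then follows since all these powers have the same CRS by Proposition \ref{CRS1} and $\sigma$ is a homomorphism.
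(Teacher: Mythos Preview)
Your setup is on the right track --- reducing to a subsurface cut off by a separating curve $c$ and invoking the map $s$ --- but the crucial step of ruling out $n\ne 0$ is missing an actual argument, and the mechanism you gesture at does not work. The centralizer of the multitwist $T_{a'}^{k-n}T_{a''}^{n}T_{b'}^{-k}$ (with $a',a'',b'$ pairwise disjoint) is no smaller than the centralizer of $T_{\tilde a}^{k}T_{\tilde b}^{-k}$: in both cases it contains the full Torelli group of each complementary component together with the twists about the boundary curves, so the $\Z^{2g-3}$ count from Step~2 of Lemma~\ref{lemma:liftfacts} cannot distinguish $n=0$ from $n\ne 0$. Likewise, commutation with separating twists $T_{c'}$ disjoint from $a,b$ only forces $\CRS(\sigma((T_aT_b^{-1})^k))$ to avoid such $c'$; it does not prevent a third curve $a''$ from appearing. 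And the forgetful map $\pi$ kills $(T_{a'}^{-1}T_{a''})^n$ (since $a',a''$ become isotopic on $\Sigma_g$), so the section condition alone is compatible with any $n$.

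The paper's proof supplies the missing ingredient. One first chooses $c$ so that $a,b,c$ bound a \emph{pair of pants} (not merely disjoint from $a,b$ as you wrote); this is exactly what makes $(T_aT_b^{-1})^k$ a simple element $\alpha(a,b)^k$ of the handle-pushing subgroup $\overline{\mathcal H}\le\pi_1(\Sigma_p)$. Computing $s(\alpha(a,b)^k)$ via $p_{1,*}$ on the expression \eqref{equation:charlie} yields $s(\alpha^k)=\alpha^{m(\alpha,k)}$ for some integer $m(\alpha,k)$ depending on the bounding pair. The decisive step is then a purely group-theoretic lemma (Lemma~\ref{lemma:Ghom}): any homomorphism $f\colon G\to\pi_1(\Sigma_p)$ from a finite-index subgroup satisfying $f(\alpha^k)=\alpha^{m(\alpha,k)}$ for all simple $\alpha$ must have $m(\alpha,k)\in\{0,k\}$, uniformly in $\alpha$. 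This exploits conjugation by simple elements and the fact that in a surface group commuting elements share a common root --- an algebraic rigidity that your CRS/centralizer bookkeeping cannot access.
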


\begin{proof}
Let $c$ be a separating curve on $\Sigma_g$ dividing $\Sigma_g$ into components $\Sigma_{p,1}, \Sigma_{q,1}$, each of genus $p,q \ge2$. Let $a,b$ be a bounding pair on $\Sigma_g$ such that $a,b,c$ forms a pair of pants; observe that for {\em any} bounding pair $a,b$, there exists a curve $c$ as above. For instance, in Figure \ref{figure:handlepush}, the curves $\{\gamma_L,\gamma_R\}$ form such a bounding pair relative to the $c$ as shown. As $T_a^k T_b^{-k}$ commutes with $T_c^\ell$, the same is true for the lifts $\sigma(T_a^k T_b^{-k})$ and $\sigma(T_c^\ell) = T_{\tilde c}^\ell$. In particular, $\sigma(T_a^k T_b^{-k})$ is supported on exactly one component of the surface $\Sigma_{g,*} \setminus \{\tilde c\}$. There are thus two possibilities to consider, depending on whether this component also contains $*$. 

According to Lemma \ref{lemma:liftfacts}.\ref{item:BP}, there are simple closed curves $\tilde a, \tilde a', \tilde b \subset \Sigma_{g,*}$ and   an integer $m$ such that
\begin{equation}\label{equation:charlie}
\sigma(T_a^k T_b^{-k}) = T_{\tilde a}^{k-m} T_{\tilde a'}^{m} T_{\tilde b}^{-k}.
\end{equation}
The curves $\tilde a$ and $\tilde a'$ are isotopic on $\Sigma_g$, but may not be isotopic on $\Sigma_{g,*}$, i.e. $\tilde a \cup \tilde a'$ can bound an annulus $A$ containing the marked point $*$. If this is not the case, then $\tilde a, \tilde a'$ determine the same isotopy class on $\Sigma_{g,*}$, and the result follows. Note that in the case where $A$ and $*$ are contained in distinct components of $\Sigma_{g,*} \setminus\{\tilde c\}$, this must necessarily hold. 

We therefore assume that $A$ and $*$ are contained in the same component $\Sigma_{p,1,*} \subset \Sigma_{g,*}$. Since $a,b,c$ form a pair of pants on $\Sigma_g$, it follows that $T_a^k T_b^{-k} \in \mathcal H$, the handle-pushing subgroup. In fact, there is a {\em one-to-one correspondence} between elements of $\overline{\mathcal H}$ represented by (a power of) a {\em simple} closed curve on $\Sigma_{p}$, and the set of bounding pairs $a,b$ under consideration. We write $\alpha(a,b) \in \pi_1(\Sigma_p)$ for the element of $\overline{\mathcal H}$ corresponding to the bounding pair $T_aT_b^{-1}$. Our proof now proceeds by analyzing $s$ on such elements of $\overline{\mathcal H}$. 

As observed above, $*$ may or may not be contained in the annulus $A$. If $*$ is not, we can reformulate the above argument by observing that $s(\alpha(a,b)^k) = 1$. In the remaining case, we aim to show that either $m=0$ or $m = k$ in \eqref{equation:charlie}. As (without loss of generality) $\tilde a'$ becomes isotopic to $\tilde b$ upon capping $c$ by a disk, it follows that 
\begin{equation}\label{equation:BPlift}
s(\alpha(a,b)^k) = p_{1,*}(T_{\tilde a}^{k-m} T_{\tilde a'}^{m} T_{\tilde b}^{-k}) = T_{\tilde a}^{k-m} T_{\tilde b}^{m-k} = \alpha(a,b)^{m-k}.
\end{equation}

To summarize, we have shown that for all bounding pairs $a,b$ under consideration, there is an integer $m(a,b,k)$ such that 
\[
s(\alpha(a,b)^k) = \alpha(a,b)^{m(a,b,k)}.
\]
The desired assertion $m = 0$ or $m = k$ now follows from Lemma \ref{lemma:Ghom} below. \end{proof}

\begin{lemma}\label{lemma:Ghom}
Let $G \le \pi_1(\Sigma_p)$ be a subgroup of finite index, and let $f: G \to \pi_1(\Sigma_p)$ be an arbitrary homomorphism. Suppose that for all simple elements $\alpha \in \pi_1(\Sigma_p)$, there is an integer $m(\alpha,k)$ such that 
\[
f(\alpha^k) = \alpha^{m(\alpha,k)}.
\]
Then either $m(\alpha,k) = 0$ or else $m(\alpha,k) = k$, independent of $\alpha$.
\end{lemma}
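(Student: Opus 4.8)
The plan is to pin down the homomorphism $f$ by looking at how it interacts with intersection numbers and products of simple elements. First I would observe that the hypothesis gives us a great deal of rigidity: for each simple $\alpha$, the cyclic subgroup $\langle \alpha^k \rangle$ is sent into $\langle \alpha \rangle$, so in particular $f$ carries the (infinite cyclic, hence torsion-free) subgroup generated by any simple element back into its own span. A first reduction is to show that the exponent $m(\alpha, k)$ depends linearly on $k$: since $f$ is a homomorphism, $f(\alpha^{k_1 + k_2}) = f(\alpha^{k_1}) f(\alpha^{k_2})$, and all three elements lie in the abelian group $\langle \alpha \rangle$, which forces $m(\alpha, k_1 + k_2) = m(\alpha, k_1) + m(\alpha, k_2)$ whenever all three powers lie in $G$; passing to a common multiple we get $m(\alpha, k) = \lambda(\alpha) \cdot k$ for a rational constant $\lambda(\alpha)$ (a priori depending on $\alpha$), and in fact $\lambda(\alpha) \in \Z$ by taking $k$ to be a multiple of the index of $G$. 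So $f(\alpha^k) = \alpha^{\lambda(\alpha) k}$.

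Next I would show $\lambda(\alpha)$ is independent of $\alpha$. The key tool is that simple closed curves on $\Sigma_p$ can be chained together: given two simple elements $\alpha, \beta$ whose product (or some product of conjugates) is again represented by a simple curve, the relation among them in $\pi_1(\Sigma_p)$, together with the fact that $f$ respects it, should force $\lambda(\alpha) = \lambda(\beta)$. Concretely, if $\alpha$ and $\beta$ are represented by simple closed curves meeting once, then a regular neighborhood of $\alpha \cup \beta$ is a one-holed torus, and inside it one finds infinitely many simple elements of the form (roughly) $\alpha \beta^n$; applying $f$ to a high power of such an element, which must land in its own cyclic span, and comparing with $f(\alpha^k)$ and $f(\beta^k)$ via the abelianization of the one-holed torus subgroup (or via intersection numbers in $\pi_1$), pins $\lambda(\alpha) = \lambda(\beta)$. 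Since the ``meets once'' graph on isotopy classes of nonseparating simple closed curves on $\Sigma_p$ ($p \ge 2$) is connected, this propagates to show $\lambda := \lambda(\alpha)$ is a single integer.

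Finally I would rule out all values of $\lambda$ other than $0$ and $1$. Here the constraint is that $f$ is a genuine homomorphism of the whole group $G$, not merely a rule on cyclic subgroups. Pick two simple elements $\alpha, \beta$ with $i(\alpha, \beta) = 1$ and consider their commutator or their product $\gamma = \alpha\beta$, which (after possibly replacing by conjugates/powers to land in $G$) is again representable by a simple closed curve, so $f(\gamma^k) = \gamma^{\lambda k}$. On the other hand $f(\gamma^k)$ is determined by $f$ on $\alpha$ and $\beta$ only up to the nonabelian structure; computing in the rank-two free subgroup $\langle \alpha, \beta \rangle \le \pi_1(\Sigma_p)$ (a one-holed torus, whose commutator is the boundary), the map $f$ restricted there sends $\alpha \mapsto \alpha^\lambda$, $\beta \mapsto \beta^\lambda$ on the relevant powers, so it sends the boundary word $[\alpha,\beta]$ to $[\alpha^\lambda, \beta^\lambda]$; but the boundary is itself (a power of) a separating simple element on $\Sigma_p$ — wait, it is not simple in $\Sigma_p$ unless we are careful — so instead I would compare the image under $f$ composed with the symplectic (homological) representation, or with abelianization of a finite-index subgroup: on $H_1$, $f$ acts on the class of each simple curve by multiplication by $\lambda$, hence acts on all of $H_1(G; \Q) \cong H_1(\Sigma_p;\Q)$ by the scalar $\lambda$; but the algebraic intersection pairing is preserved up to the determinant, giving $\lambda^2 = 1$ on the symplectic form — combined with $\lambda \ge 0$ not being forced, one still gets $\lambda \in \{-1, 0, 1\}$, and the sign $\lambda = -1$ is excluded because $f(\alpha^k) = \alpha^{-k}$ for all simple $\alpha$ is incompatible with $f$ being a homomorphism on a pair of curves meeting once (the relation $f(\alpha\beta\alpha^{-1}) = f((\alpha\beta\alpha^{-1}))$ with $\alpha\beta\alpha^{-1}$ simple forces consistency that fails).

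The main obstacle I expect is the last step: cleanly extracting $\lambda \in \{0,1\}$ rather than merely $\lambda^2 = 1$, i.e. eliminating $\lambda = -1$. The cleanest route is probably to use three simple curves $\alpha, \beta, \delta$ in a ``chain'' (so $\alpha\beta$ and $\beta\delta$ are simple but the pattern of intersections forces an asymmetry under $x \mapsto x^{-1}$), or to invoke that an orientation-reversing scalar on $H_1$ cannot be realized by a homomorphism that fixes each cyclic subgroup $\langle\alpha\rangle$ setwise with the \emph{given} orientation coming from the bounding-pair correspondence in Lemma \ref{lemma:BPlifts}. I would want to double-check whether the lemma as stated really needs $\lambda = -1$ excluded, or whether the downstream application only needs $\lambda^2 = 1$ together with some geometric input; if the former, the chain-of-curves argument is the way to go.
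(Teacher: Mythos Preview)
Your plan diverges substantially from the paper's proof, and the divergence is costly: the step you flag as the ``main obstacle'' is not just hard, it is where your argument actually breaks. The claim that ``the algebraic intersection pairing is preserved up to the determinant, giving $\lambda^2 = 1$'' is unfounded: $f$ is an arbitrary homomorphism $G \to \pi_1(\Sigma_p)$, and there is no reason it should respect any intersection form (the trivial homomorphism, with $\lambda = 0$, certainly does not). Your alternative idea of computing $f$ on the commutator $[\alpha,\beta]$ fails for a more basic reason: $f$ is only defined on $G$, not on $\alpha, \beta$ themselves, and knowing $f(\alpha^k) = \alpha^{\lambda k}$ and $f(\beta^\ell) = \beta^{\lambda \ell}$ tells you nothing about $f([\alpha,\beta]^m)$ via the homomorphism property, since $[\alpha,\beta]^m$ is not a word in $\alpha^k$ and $\beta^\ell$. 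Nor does $f$ carry $\langle\alpha,\beta\rangle \cap G$ into $\langle\alpha,\beta\rangle$. So neither route produces the constraint $\lambda \in \{0,1\}$, and even $|\lambda|\ge 2$ is not excluded by what you have written.

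The paper's proof avoids all of this with a single observation you are missing: \emph{conjugates of simple elements are simple}. Given simple $\alpha, \beta$ with $\alpha \ne \beta^{\pm 1}$ and $\alpha^k, \beta^\ell \in G$, compute $f(\beta^\ell \alpha^k \beta^{-\ell})$ in two ways. Since $\beta^\ell \alpha \beta^{-\ell}$ is simple, the hypothesis gives $f((\beta^\ell\alpha\beta^{-\ell})^k) = (\beta^\ell \alpha \beta^{-\ell})^{m'}$ for some $m'$; since distinct powers of a primitive element of a surface group are never conjugate, in fact $m' = m(\alpha,k)$. On the other hand, $f(\beta^\ell \alpha^k \beta^{-\ell}) = \beta^{m(\beta,\ell)} \alpha^{m(\alpha,k)} \beta^{-m(\beta,\ell)}$. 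Equating, $\beta^{\ell - m(\beta,\ell)}$ centralizes $\alpha^{m(\alpha,k)}$; since nontrivial elements of $\pi_1(\Sigma_p)$ commute only when they share a common root, this forces either $m(\beta,\ell) = \ell$ or $m(\alpha,k) = 0$. Swapping the roles of $\alpha$ and $\beta$ then yields the dichotomy directly---no linearity reduction, no chain-of-curves connectivity, no homology, and no residual $\lambda = -1$ case.
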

\begin{proof}
Suppose $\alpha, \beta$ are simple elements. Then for any $\ell$, the conjugate $\beta^\ell \alpha \beta^{-\ell}$ is also simple. Choose $k, \ell$ such that $\alpha^k$ and $\beta^\ell$ are both elements of $G$. Then definitionally,
\begin{equation}\label{equation:a1}
f( \beta^\ell \alpha^k \beta^{-\ell}) = (\beta^\ell \alpha \beta^{-\ell})^ {m(\beta^\ell \alpha \beta^{-\ell}, k)}.
\end{equation}
On the other hand, it is clear that $m(\beta,-\ell) = -m(\beta, \ell)$, and so
\begin{equation}\label{equation:a2}
f( \beta^\ell \alpha^k \beta^{-\ell})  = f(\beta^\ell) f(\alpha^k) f(\beta^{-\ell}) = \beta^{m(\beta, \ell)} \alpha^{m(\alpha,k)} \beta^{-m(\beta, \ell)}.
\end{equation}
For an arbitrary nontrivial element $\gamma \in \pi_1(\Sigma_p)$ and integers $m,n$, the elements $\gamma^m$ and $\gamma^n$ are conjugate if and only if $m = n$. It follows that $m(\alpha,k) = m(\beta^\ell \alpha \beta^{-\ell}, k)$. Thus,
\[
(\beta^\ell \alpha \beta^{-\ell})^ {m(\alpha, k)} = \beta^{m(\beta, \ell)} \alpha^{m(\alpha,k)} \beta^{-m(\beta, \ell)},
\]
and so
\[
\beta^{\ell - m(\beta,\ell)} \alpha^{m(\alpha,k)} \beta^{m(\beta,\ell) - \ell} = \alpha^{m(\alpha,k)}.
\]
Nontrivial elements $x,y \in \pi_1(\Sigma_p)$ commute if and only if there are nonzero integers $c,d$ such that $x^c = y^d$. As $\alpha, \beta$ were assumed to be simple, we conclude that one of three conditions must hold: (1) $\alpha = \beta^{\pm 1}$, or (2) $\ell = m(\beta,\ell)$ or else (3) $m(\alpha,k) = 0$. 

Case (1) provides no further information; we henceforth assume that $\alpha \ne \beta^{\pm 1}$. To finish the argument, we must show that if $m(\alpha,k) = 0$, then $m(\beta,\ell) = 0$ for all $\beta, \ell$. Suppose to the contrary that there is some $\beta$ such that $m(\beta, \ell) \ne 0$. Reversing the roles of $\alpha$ and $\beta$ in the above argument, we see that (2) must hold and so $k = m(\alpha,k)$, but this contradicts the assumption $m(\alpha,k) = 0$. 
\end{proof}

Translated into the setting of the homomorphism $s:\overline{\mathcal H} \to \pi_1(\Sigma_p)$, Lemmas \ref{lemma:BPlifts} and \ref{lemma:Ghom} combine to give the following immediate but crucial corollary.
\begin{corollary}\label{corollary:cases}
The homomorphism $s:\overline{ \mathcal H} \to \pi_1(\Sigma_p)$ has one of the following properties:
\begin{enumerate}[(A)]
\item $s(\alpha^k) = \alpha^k$ for all elements $\alpha^k \in \overline{\mathcal H}$ such that $\alpha \in \pi_1(\Sigma_p)$ is simple.
\item $s(\alpha^k) = 1$ for all elements $\alpha^k \in \overline{\mathcal H}$ such that $\alpha \in \pi_1(\Sigma_p)$ is simple.
\end{enumerate}
\end{corollary}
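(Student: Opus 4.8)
The plan is to read off the corollary directly from the work already done in Lemmas~\ref{lemma:BPlifts} and~\ref{lemma:Ghom}; this is a pure bookkeeping step, so the ``proof'' is short. First I would recall the intermediate fact established in the course of proving Lemma~\ref{lemma:BPlifts}: via the one-to-one correspondence between simple elements of $\overline{\mathcal H}$ (those represented by a power of a simple closed curve $\gamma$ on $\Sigma_p$) and bounding pairs $\{a,b\} = \{\gamma_L,\gamma_R\}$ forming a pair of pants with the fixed separating curve $c$, together with the computation \eqref{equation:BPlift}, every such simple element $\alpha^k \in \overline{\mathcal H}$ satisfies $s(\alpha^k) = \alpha^{m(\alpha,k)}$ for some integer $m(\alpha,k)$ depending only on $\alpha$ and $k$.

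Next I would apply Lemma~\ref{lemma:Ghom} with $G = \overline{\mathcal H}$ and $f = s$. Here $\overline{\mathcal H}$ is a finite-index subgroup of $\pi_1(\Sigma_p)$ by the construction underlying Lemma~\ref{lemma:sbar}, and the displayed hypothesis of Lemma~\ref{lemma:Ghom} is precisely the fact recalled in the previous step. Hence the conclusion of Lemma~\ref{lemma:Ghom} applies verbatim: either $m(\alpha,k) = 0$ for every simple $\alpha$ and every admissible $k$, or else $m(\alpha,k) = k$ for every simple $\alpha$ and every admissible $k$, with the dichotomy independent of $\alpha$.

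Finally I would translate the two alternatives back into the statement: in the first case $s(\alpha^k) = \alpha^0 = 1$, which is property (B); in the second case $s(\alpha^k) = \alpha^k$, which is property (A). There is no genuine obstacle here --- the only point that requires a moment's care is confirming that the hypothesis format of Lemma~\ref{lemma:Ghom} matches exactly what the proof of Lemma~\ref{lemma:BPlifts} produces, namely that for \emph{every} simple $\alpha$ with $\alpha^k \in \overline{\mathcal H}$ there is such an integer $m(\alpha,k)$, which is the assertion summarized midway through that proof.
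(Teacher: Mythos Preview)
Your proposal is correct and matches the paper's own approach: the paper states this corollary as an immediate consequence of Lemmas~\ref{lemma:BPlifts} and~\ref{lemma:Ghom} without further argument, and what you have written is precisely the bookkeeping that unpacks that sentence.
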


The next step of the argument considers cases (A) and (B) separately. In both cases, we will see that the formula defining $s$ on simple elements extends to all of $\overline{\mathcal H}$.

\subsection{Case (A)}
\begin{lemma}\label{lemma:caseA}
Suppose $s$ has property (A) of Corollary \ref{corollary:cases}. Then $s: \overline{\mathcal H} \to \pi_1(\Sigma_p)$ is given by the inclusion map.
\end{lemma}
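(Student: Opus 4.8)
The goal is to upgrade the identity $s(\alpha^k)=\alpha^k$ on powers of simple elements to the statement that $s$ is literally the inclusion $\overline{\mathcal H}\hookrightarrow\pi_1(\Sigma_p)$. The strategy is to show that $s$ agrees with the inclusion on a generating set, or more robustly, to show that $s$ fixes enough cyclic subgroups to force $s=\mathrm{id}$ on $\overline{\mathcal H}$. First I would observe that since $\overline{\mathcal H}$ has finite index in $\pi_1(\Sigma_p)$, any nontrivial element $\gamma\in\overline{\mathcal H}$ has a positive power $\gamma^N$ that lies in the cyclic subgroup generated by some power of a simple element: indeed every element of a surface group is a product of simple elements, but more directly, one can find a simple closed curve $\delta$ on $\Sigma_p$ and an integer $N$ with $\gamma^N$ conjugate into (in fact equal to a power of) the simple element $\delta$ — actually the cleanest route is that $\gamma$ itself, as a conjugacy class, is represented by an immersed curve, and one can realize a power of any primitive $\gamma$ via the fact that $\pi_1(\Sigma_p)$ is generated by simple elements and $s$ is a homomorphism.

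The key step is the following: for $\gamma\in\overline{\mathcal H}$ arbitrary, pick $k$ large enough that $\gamma^k\in\overline{\mathcal H}$ and also that $\gamma^k$ can be written as $\eta^j$ only if $j\mid k$ etc.; then I want to conclude $s(\gamma^k)=\gamma^k$, and then use torsion-freeness of $\pi_1(\Sigma_p)$ together with the fact that $s(\gamma)^k = s(\gamma^k)=\gamma^k$ to extract $s(\gamma)=\gamma$ — this last deduction uses that in a surface group $x^k=y^k$ implies $x=y$ (roots are unique, since surface groups are bi-orderable, or since two elements with a common power commute and a maximal cyclic subgroup containing a given element is unique). So the real content is showing $s(\gamma^k)=\gamma^k$ for \emph{every} $\gamma$, not just simple ones. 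For this I would use that $\pi_1(\Sigma_p)$ is generated by simple closed curves $\alpha_1,\dots,\alpha_r$ (e.g. a standard generating set), choose a common $k$ with all $\alpha_i^k\in\overline{\mathcal H}$, and note $s(\alpha_i^k)=\alpha_i^k$ by property (A); then for a word $w$ in the $\alpha_i$ one does \emph{not} immediately get $s(w^k)=w^k$ since $s$ is only controlled on the subgroup. The fix: use Lemma \ref{lemma:Ghom}-style conjugation tricks — for any simple $\alpha$ and any $\beta\in\overline{\mathcal H}$, compare $s(\beta\alpha^k\beta^{-1})=\beta\alpha^k\beta^{-1}$ (since $\beta\alpha\beta^{-1}$ is simple) with $s(\beta)\,\alpha^k\,s(\beta)^{-1}$; unique-roots then gives $s(\beta)\alpha^k s(\beta)^{-1} = \beta\alpha^k\beta^{-1}$, i.e. $\beta^{-1}s(\beta)$ commutes with $\alpha^k$ for every simple $\alpha$ with $\alpha^k\in\overline{\mathcal H}$. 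Since such $\alpha^k$'s do not all lie in a single cyclic subgroup (there are simple closed curves on $\Sigma_p$ with zero intersection that are non-commuting, provided $p\ge 2$), the centralizer intersection is trivial, so $\beta^{-1}s(\beta)=1$, i.e. $s(\beta)=\beta$.

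Writing this out: the main obstacle is establishing that $\bigcap_\alpha C_{\pi_1(\Sigma_p)}(\alpha^k)=\{1\}$ where $\alpha$ ranges over simple elements with $\alpha^k\in\overline{\mathcal H}$; this is where I must use $p\ge 2$ and the structure of surface groups — two elements of a surface group commute iff they share a maximal cyclic subgroup, so an element commuting with powers of two independent (non-commuting) simple classes must be trivial. I would phrase it as: there exist simple closed curves $a,b$ on $\Sigma_p$ with $i(a,b)\neq 0$, hence the corresponding elements of $\pi_1(\Sigma_p)$ generate a non-abelian (in fact free) subgroup, so no nontrivial element commutes with powers of both. Then $s(\beta)=\beta$ for all $\beta\in\overline{\mathcal H}$, which is the claim. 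The only genuinely delicate points are: (i) recalling/citing uniqueness of roots in surface groups, and (ii) making sure we can choose a single exponent $k$ simultaneously realizing $a^k,b^k\in\overline{\mathcal H}$ for the two chosen curves and any given $\beta$ — both are routine given $\overline{\mathcal H}$ is finite-index.
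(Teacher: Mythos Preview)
Your proposal is correct and, after the exploratory first paragraph, lands on exactly the same argument as the paper: for arbitrary $\beta\in\overline{\mathcal H}$ and simple $\alpha$ with $\alpha^k\in\overline{\mathcal H}$, compare $s(\beta\alpha^k\beta^{-1})=\beta\alpha^k\beta^{-1}$ with $s(\beta)\alpha^k s(\beta)^{-1}$ to conclude $\beta^{-1}s(\beta)$ centralizes every such $\alpha^k$, hence is trivial. One cosmetic remark: the phrase ``unique-roots then gives'' is misplaced---that equality is immediate from the two computations, and the structure of centralizers in surface groups (commuting elements share a maximal cyclic subgroup) is what you actually invoke at the final step.
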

\begin{proof}
This follows easily from the method of proof of Lemma \ref{lemma:Ghom}. Let $\beta \in \overline{\mathcal H}$ be an arbitrary element, let $\alpha \in \pi_1(\Sigma_p)$ be simple, and let $\alpha^k \in \overline{\mathcal H}$. Then $\beta \alpha \beta^{-1}$ is also simple, and $\beta \alpha^k \beta^{-1} \in \overline{\mathcal H}$. As $\beta \alpha \beta^{-1}$ is simple,
\[
f(\beta \alpha^k \beta^{-1}) = \beta \alpha^k \beta^{-1};
\]
on the other hand,
\[
f(\beta \alpha^k \beta^{-1}) = f(\beta) \alpha^k f(\beta)^{-1}.
\]
Arguing as in Lemma \ref{lemma:Ghom}, this implies $f(\beta) = \beta$ as desired.
\end{proof}

\subsection{Case (B)}\label{subsection:B}
\begin{lemma}\label{lemma:caseB}
 Suppose $s$ has property (B) of Corollary \ref{corollary:cases}. Then $s: \overline{\mathcal H} \to \pi_1(\Sigma_p)$ is the trivial homomorphism.
\end{lemma}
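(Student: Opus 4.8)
The plan is to reduce the statement to the purely group-theoretic assertion that $\overline{\mathcal H}$ is generated by those of its elements that are powers of simple closed curves on $\Sigma_p$: granting this, property (B) of Corollary~\ref{corollary:cases} says precisely that $s$ kills a generating set of $\overline{\mathcal H}$, so $s$ is trivial. Thus everything comes down to the following claim: \emph{if $H \le \pi_1(\Sigma_p)$ has finite index (and $p \ge 2$), then $H$ is generated by the set $\{\alpha^k : \alpha \in \pi_1(\Sigma_p)\text{ is simple and }\alpha^k \in H\}$.}

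To prove the claim I would pass to the finite cover $q \colon \Sigma' \to \Sigma_p$ with $q_* \pi_1(\Sigma') = H$. An element $\alpha^k \in H$ with $\alpha$ simple is exactly the class of a connected component of $q^{-1}(a)$, where $a$ is a simple closed curve representing $\alpha$; such a component is an embedded circle covering $a$ with some degree. So I must show that these ``vertical'' simple closed curves generate $\pi_1(\Sigma')$. First, realizing a standard generating set of $\pi_1(\Sigma_p)$ by an embedded wedge of $2p$ simple closed curves $W \subset \Sigma_p$, a path-lifting argument shows that $\pi_1(q^{-1}(W)) \to \pi_1(\Sigma')$ is surjective. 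It then remains to see that the vertical curves over $W$ --- together with vertical curves coming from other simple closed curves on $\Sigma_p$, such as those in a pants decomposition and in enough further simple multicurves --- suffice to generate. For this one should exploit the abundance of simple closed curves on a surface: an arbitrary simple closed curve on $\Sigma'$ is, as an element of $\pi_1(\Sigma')$, a product of vertical ones (project it to $\Sigma_p$, resolve the self-intersections of the projection into simple closed curves in a way compatible with $q$, and lift). This generation statement is where the hypothesis $p \ge 2$ enters, and it may well be available in the literature on surface groups; in any event it should be provable directly.

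I expect this generation step to be the main obstacle. It is genuinely stronger than the elementary observation that the preimage of a \emph{filling} collection of simple closed curves fills $\Sigma'$ --- a graph can have fundamental group far larger than the subgroup generated by a prescribed set of loops in it --- and, tellingly, the naive analogue for free groups (with ``simple element'' replaced by ``conjugate of a free generator'') is false. If proving the sharp statement turns out to be awkward, there is a softer route: it suffices to show that the vertical curves generate merely a \emph{finite-index} subgroup of $\overline{\mathcal H}$. Indeed $s$ would then vanish on a finite-index subgroup, so $s(\overline{\mathcal H})$ would be a finite subgroup of $\pi_1(\Sigma_p)$, hence trivial since $\pi_1(\Sigma_p)$ is torsion-free; thus $s$ is trivial in any case.
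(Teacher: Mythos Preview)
Your reduction is clean, but the group-theoretic claim it rests on is \emph{false}. There exist finite-index subgroups $H \le \pi_1(\Sigma_p)$ (for $p \ge 2$) in which the set of elements that are powers of simple elements of $\pi_1(\Sigma_p)$ does not generate $H$, nor even a finite-index subgroup of $H$. Geometrically: there are finite covers $\Sigma' \to \Sigma_p$ for which the lifts of simple closed curves on $\Sigma_p$ span a \emph{proper} subspace of $H_1(\Sigma';\Q)$; this was established by Koberda and Santharoubane using quantum representations of mapping class groups. Since a finite-index subgroup would surject onto a finite-index subgroup of $H_1(\Sigma';\Q)$, your ``softer route'' fails for exactly the same reason. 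Your instinct that the free-group analogue fails was the right warning sign --- the surface-group statement fails too, just much less obviously.

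The paper's argument avoids this generation question entirely. Instead of asking simple powers to generate $\overline{\mathcal H}$, it shows something weaker but sufficient: for every \emph{separating twist} $T_d$ on $\Sigma_p$ and every $x \in \overline{\mathcal H}$, the element $T_d^k(x)\,x^{-1}$ is a product of powers of simple elements (this is a direct computation with the lifted multitwist on the cover). Property (B) then gives $s(T_d^k(x)) = s(x)$. Separately one checks that $s$ is equivariant for the outer action of a finite-index subgroup of $\mathcal I(\Sigma_p)$, so $T_d^k(s(x))$ and $s(x)$ are conjugate in $\pi_1(\Sigma_p)$; by Lemma~\ref{lemma:BPint} this forces $s(x)$ to be disjoint from $d$. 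Letting $d$ range over all separating curves kills $s(x)$. The point is that ``$T_d^k(x)$ and $x$ differ by simple powers'' is far weaker than ``simple powers generate'', and the equivariance of $s$ is what bridges the gap.
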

The proof of Lemma \ref{lemma:caseB} will require a further analysis of $s$. This will require some preliminary work to describe. By passing to a further finite-index subgroup $\Gamma' \le \Gamma$ if necessary, we can assume that $\mathcal H \le \pi_1(UT\Sigma_p)$ is {\em characteristic} and hence the conjugation action of $\mathcal I(\Sigma_{p,1})$ on $\pi_1(UT\Sigma_p)$ preserves $\mathcal H$. This descends to an action of $\mathcal I(\Sigma_{p,*})$ on $\overline{\mathcal H}$. Thus there is a homomorphism
\[
\lambda: \mathcal I(\Sigma_{p,*}) \to \Aut(\overline{\mathcal H}).
\]
Consider now the images $\overline{\Gamma} \le \mathcal I(\Sigma_{p,*})$ and $\overline{\overline{\Gamma}} \le \mathcal I(\Sigma_p)$. By construction, $\overline{\Gamma} \cap \pi_1(\Sigma_p) = \overline{\mathcal H}$. As conjugation by $\overline{\mathcal H}$ is an inner automorphism, $\lambda$ descends to a homomorphism 
\[
\overline \lambda: \overline{\overline{\Gamma}} \to \Out(\overline{\mathcal H}).
\]

\begin{lemma}\label{lemma:equivariant}
The homomorphism $s$ is $\overline{\overline{\Gamma}}$-equivariant. That is, for any outer automorphism $[\alpha] \in \overline{\overline{\Gamma}}$ and any $x \in \overline{\mathcal H}$, the conjugacy classes of $s(\alpha\cdot x)$ and $\alpha\cdot s(x)$ in $\pi_1(\Sigma_g)$ coincide. 
\end{lemma}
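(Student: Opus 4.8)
The plan is to deduce the equivariance of $s$ from two facts, each of which makes its ingredient "equivariant" in an essentially tautological way. First, $\sigma$ is a homomorphism, so it carries conjugates to conjugates. Second, $p_{1,*}$ is induced by the boundary-capping map $\Sigma_{p,1,*} \to \Sigma_{p,*}$, and is therefore the restriction to the normal subgroup $\PB_{1,1}(\Sigma_p)$ of the capping homomorphism $c_*\colon \mathcal I(\Sigma_{p,1,*}) \to \mathcal I(\Sigma_{p,*})$; in particular it too intertwines conjugation on source and target. Composing the two, $s$ will inherit equivariance, up to precisely the outer ambiguities that the statement permits us to discard. Throughout I write $\Gamma_p := \Gamma \cap \mathcal I(\Sigma_{p,1})$, so that $\overline{\Gamma}$ and $\overline{\overline{\Gamma}}$ are the images of $\Gamma_p$ in $\mathcal I(\Sigma_{p,*})$ and $\mathcal I(\Sigma_p)$; recall also $\mathcal H \trianglelefteq \Gamma_p$ (as $\mathcal H$ is characteristic in the normal subgroup $\pi_1(UT\Sigma_p)$) and $\overline{\mathcal H} = \varpi(\mathcal H)$.

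The main computation goes as follows. Fix $[\alpha] \in \overline{\overline{\Gamma}}$ and $x \in \overline{\mathcal H}$, and choose a lift $g \in \Gamma_p$ of $[\alpha]$ and a lift $\hat x \in \mathcal H$ of $x$. Unwinding the construction of the action $\lambda$ of $\mathcal I(\Sigma_{p,*})$ on $\overline{\mathcal H}$ — conjugate $\mathcal H$ by a lift in $\mathcal I(\Sigma_{p,1})$, then push forward by $\varpi$ — the element $\varpi(g\hat x g^{-1}) \in \overline{\mathcal H}$ represents the outer orbit $[\alpha]\cdot x$. Using $\tilde s = s\circ\varpi = p_{1,*}\circ\sigma$ together with the fact that $\sigma$ is a homomorphism and $g,\hat x \in \Gamma$, one obtains
\[
s\big(\varpi(g\hat x g^{-1})\big) \;=\; \tilde s(g\hat x g^{-1}) \;=\; p_{1,*}\big(\sigma(g)\,\sigma(\hat x)\,\sigma(g)^{-1}\big).
\]
For this to make sense one needs $\sigma(g) \in \mathcal I(\Sigma_{p,1,*})$ — which holds by the canonical-reduction-system argument of Section \ref{section:proof} — and $\sigma(\hat x) \in \PB_{1,1}(\Sigma_p)$ — which holds because $\sigma$ is compatible with the projections to $\mathcal I(\Sigma_p)$ in diagram (\ref{equation:BES}) and $\mathcal H$ maps trivially there; normality of $\PB_{1,1}(\Sigma_p)$ then puts the whole conjugate in $\PB_{1,1}(\Sigma_p)$, so $p_{1,*}$ applies.

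Now I would invoke that $p_{1,*}$ is the restriction of $c_*$: the right-hand side above equals $c_*(\sigma(g))\, p_{1,*}(\sigma(\hat x))\, c_*(\sigma(g))^{-1}$, a conjugation carried out in $\mathcal I(\Sigma_{p,*})$, which preserves the point-pushing subgroup $\pi_1(\Sigma_p)$. Since $p_{1,*}(\sigma(\hat x)) = \tilde s(\hat x) = s(x)$, and since $c_*(\sigma(g))$ is a lift of $[\alpha] \in \mathcal I(\Sigma_p)$ — this uses $\pi\circ\sigma = \id$ and the compatibility of the various capping maps — we conclude that $s(\varpi(g\hat x g^{-1})) = c_*(\sigma(g))\, s(x)\, c_*(\sigma(g))^{-1}$ represents the conjugacy class $\alpha\cdot s(x)$ for the point-pushing outer action of $\mathcal I(\Sigma_p)$ on $\pi_1(\Sigma_p)$, whereas its left-hand side represents $s(\alpha\cdot x)$. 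Passing to conjugacy classes in $\pi_1(\Sigma_p)$ then gives the lemma.

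I do not expect a single hard step here: the content is bookkeeping, and the real hazard is carelessness. The points that need attention are the identification of the two actions appearing in the statement — the outer $\overline{\overline{\Gamma}}$-action on $\overline{\mathcal H}$ and the point-pushing action on $\pi_1(\Sigma_p)$ — as both being realized by conjugation by a single lift of $[\alpha]$ in compatible ambient groups; keeping straight which boundary component and which puncture is being capped or filled at each stage; and verifying that the dependence on all auxiliary choices (the lifts $g$, $\hat x$, and $c_*(\sigma(g))$) is exactly what is absorbed upon passing to conjugacy classes.
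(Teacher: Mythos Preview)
Your argument is correct and follows essentially the same route as the paper: lift $[\alpha]$ to $g\in\Gamma_p$, use that $\sigma$ is a homomorphism to get $p_{1,*}(\sigma(g)\sigma(\hat x)\sigma(g)^{-1})$, then invoke that $p_{1,*}$ is the restriction of the capping map $\mathcal I(\Sigma_{p,1,*})\to\mathcal I(\Sigma_{p,*})$ so the conjugation passes through, and finally check via the commuting square of capping/forgetful maps that $c_*(\sigma(g))$ projects to $[\alpha]$ in $\mathcal I(\Sigma_p)$. If anything, you are slightly more explicit than the paper about why $\sigma(g)\in\mathcal I(\Sigma_{p,1,*})$ and $\sigma(\hat x)\in\PB_{1,1}(\Sigma_p)$, but the substance is identical.
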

\begin{proof}
Let $a \in \overline{\overline{\Gamma}}$ be given. Choose an element $\alpha \in \Gamma$ descending to the outer automorphism class $a$. By construction, for $x \in \overline{\mathcal{H}}$, the image $s(x)$ is given by $(p_{1,*} \circ \sigma)(\tilde x)$, where $\tilde x \in \mathcal H$ is any lift. On $\mathcal H$, the action of $\overline{\overline{\Gamma}}$ is induced by the conjugation action $\tilde x \mapsto \alpha \tilde x \alpha^{-1}$. Thus
\[
s(a \cdot x) = p_{1,*}(\sigma(\alpha \tilde x \alpha^{-1})) = p_{1,*}(\sigma(\alpha))\  s(x)\  p_{1,*}(\sigma(\alpha))^{-1}.
\]
Here we exploit the fact that $p_{1,*}: \PB_{1,1}(\Sigma_p) \to \pi_1(\Sigma_p)$ is the restriction of the forgetful homomorphism 
\[
p_{1,*}: \mathcal I(\Sigma_{p,1,*}) \to \mathcal I(\Sigma_{p,*}).
\]
 To finish the argument, it suffices to show that $[p_{1,*}(\sigma(\alpha))] = a$ as elements of $\mathcal I(\Sigma_p)$. This follows from the fact that $\sigma: \Gamma \to \mathcal I(\Sigma_{p,1,*})$ is a section of the map $p_{2,*}: \mathcal I(\Sigma_{p,1,*}) \to \mathcal I (\Sigma_{p,1})$ in combination with the commutativity of the diagram
\[
\xymatrix{
\mathcal I(\Sigma_{p,1,*}) \ar[r]^{p_{1,*}} \ar[d]_{p_{2,*}}		& \mathcal I(\Sigma_{p,*}) \ar[d]\\
\mathcal I(\Sigma_{p,1}) \ar[r]				&\mathcal I(\Sigma_p).
}
\]
\end{proof}

\begin{proof} (of Lemma \ref{lemma:caseB}) Let $x \in \overline{\mathcal H}$ be an arbitrary element, and let $d$ be an arbitrary separating curve on $\Sigma_{p,1}$. Taking $k$ such that $T_d^k \in \Gamma$ and applying Lemma \ref{lemma:equivariant}, there is an equality
\[
s(T_d^k(x)) = T_d^k (s(x))
\]
of conjugacy classes in $\pi_1(\Sigma_p)$. To proceed, we will analyze the conjugacy class of $T_d^k(x)$ in $\overline{\mathcal H}$. This is complicated by the fact that in this expression, $T_d^k$ acts on $x$ {\em not} as a separating twist on $\Sigma_{p}$, but rather as the {\em lift} of such a twist to the finite-sheeted cover $\Sigma_r \to \Sigma_p$ corresponding to the finite-index subgroup $\overline{\mathcal H}$. 

\begin{lemma}\label{BPfactor}
Let $T_d$ be a Dehn twist on $\Sigma_{p,1}$, and let $x \in \overline{\mathcal H}$ be an arbitrary element. Then there exists some $k \ge 1$, {\em simple} elements $\gamma_1, \dots, \gamma_N$ of $\pi_1(\Sigma_p)$ and integers $f_1, \dots, f_N$, such that $\gamma_i^{f_i} \in \overline{\mathcal H}$ for all $i$, and there is an expression
\[
T_d^k(x)  = \gamma_1^{f_1} \dots \gamma_N^{f_N} x
\]
of elements of $\overline{\mathcal H}$. 
\end{lemma}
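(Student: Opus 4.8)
The plan is to realize the action of $T_d^k$ on $\overline{\mathcal{H}}$ as an honest Dehn multitwist in the finite cover $\pi\colon \Sigma_r\to\Sigma_p$ determined by $\pi_1(\Sigma_r)=\overline{\mathcal{H}}$, to apply the standard formula for the action of a multitwist on a surface group, and then to push the resulting word back down to $\pi_1(\Sigma_p)$, keeping careful track of which group each conjugating element lives in. Let $\bar d\subset\Sigma_p$ be the image of $d$ after capping the boundary of $\Sigma_{p,1}$; if $\bar d$ is nullhomotopic the statement holds trivially with $N=0$, so assume $\bar d$ is an essential simple closed curve. Since $\overline{\mathcal{H}}$ is characteristic in $\pi_1(\Sigma_p)$, the cover $\pi$ is regular, and $\pi^{-1}(\bar d)=d_1\cup d_2\cup\cdots$ is a disjoint union of essential simple closed curves on $\Sigma_r$, with $\pi|_{d_i}\colon d_i\to\bar d$ a covering of some degree $e_i$. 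Choose $k\ge1$ to be a common multiple of the $e_i$ (and, for the intended application, a large enough such multiple that $T_d^k\in\Gamma$). A short computation in an annular neighborhood of each $d_i$ shows that $T_{\bar d}^{\,k}$ has a lift to $\Sigma_r$ that restricts to the identity off $\pi^{-1}(\bar d)$, and that this lift is the multitwist $\Theta:=\prod_i T_{d_i}^{\,k/e_i}$; as recorded in the discussion preceding the lemma, $\Theta$ is exactly the automorphism of $\overline{\mathcal{H}}=\pi_1(\Sigma_r,\hat x_0)$ by which $T_d^k$ acts.

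Given this, I would apply the classical description of the action of a Dehn multitwist on a surface group: fixing a based representative $\bar d_i\in\pi_1(\Sigma_r)$ of each $d_i$, and representing $x\in\pi_1(\Sigma_r)$ by a loop transverse to $\bigcup_i d_i$, one has
\[
\Theta(x)\,x^{-1}=\prod_{t=1}^{N} h_t\,\bar d_{i_t}^{\;\varepsilon_t k/e_{i_t}}\,h_t^{-1},
\]
where $t$ runs over the crossings of the loop with $\bigcup_i d_i$, each $\varepsilon_t=\pm1$ records the sign of the crossing, and each $h_t$ is an element of $\pi_1(\Sigma_r)$ built from an initial arc of the loop, an arc along $d_{i_t}$, and a fixed auxiliary path. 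Now push down to $\pi_1(\Sigma_p)$: under $\pi_*\colon\pi_1(\Sigma_r)\hookrightarrow\pi_1(\Sigma_p)$ the loop $\bar d_i$ maps to a conjugate $v_i\bar d^{\,e_i}v_i^{-1}$ of the $e_i$-th power of $\bar d$ (for some $v_i\in\pi_1(\Sigma_p)$), since $\pi|_{d_i}$ wraps $e_i$ times around $\bar d$. Hence, setting $\gamma_t:=(h_tv_{i_t})\,\bar d\,(h_tv_{i_t})^{-1}$ and $f_t:=\varepsilon_t k$,
\[
h_t\,\bar d_{i_t}^{\;\varepsilon_t k/e_{i_t}}\,h_t^{-1}=\bigl(h_tv_{i_t}\,\bar d\,v_{i_t}^{-1}h_t^{-1}\bigr)^{\varepsilon_t k}=\gamma_t^{\,f_t}.
\]
Each $\gamma_t$ is a conjugate of the simple closed curve $\bar d$, hence a simple element of $\pi_1(\Sigma_p)$; and each $\gamma_t^{f_t}$, being literally the element $h_t\bar d_{i_t}^{\varepsilon_t k/e_{i_t}}h_t^{-1}$ with $h_t,\bar d_{i_t}\in\pi_1(\Sigma_r)$, lies in $\overline{\mathcal{H}}$. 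Combining the two displays yields $T_d^k(x)=\Theta(x)=\gamma_1^{f_1}\cdots\gamma_N^{f_N}\,x$ as an identity in $\overline{\mathcal{H}}$, which is the assertion.

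The step I expect to require the most care is the multitwist realization in the first paragraph. A separating twist $T_{\bar d}$ typically does \emph{not} lift to a multitwist on $\Sigma_r$ at all: its naive lift over an annular neighborhood of $\bar d$ is a fractional twist, which must be corrected by a deck transformation and so is not supported near $\pi^{-1}(\bar d)$. It is exactly when the exponent is divisible by every covering degree $e_i$ that the lift of $T_{\bar d}^{\,k}$ can be taken to be the honest multitwist $\prod_i T_{d_i}^{k/e_i}$; and this same divisibility is what lets each $e_i$ be absorbed into the exponent in the computation above, so that the $\gamma_t$ come out as conjugates of the simple curve $\bar d$ (rather than of the longer curves $d_i$, which are \emph{not} simple elements of $\pi_1(\Sigma_p)$) while the conjugating elements $h_t$ remain inside $\overline{\mathcal{H}}$.
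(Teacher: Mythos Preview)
Your proof is correct and follows essentially the same approach as the paper's: both lift $T_d^k$ to a multitwist $\prod_i T_{\tilde d_i}^{k/e_i}$ on the cover $\Sigma_r$ corresponding to $\overline{\mathcal H}$, apply the geometric description of a multitwist acting on a based loop to obtain a product of conjugates of the $\tilde d_i$, and then observe that each such conjugate is a power of a conjugate of the simple element $\bar d\in\pi_1(\Sigma_p)$. Your write-up is somewhat more explicit about the divisibility condition on $k$ and the passage from $\bar d_i$ to a conjugate of $\bar d^{e_i}$, but the ideas coincide; one small caveat is that the paper only arranges $\mathcal H$ to be characteristic in $\pi_1(UT\Sigma_p)$, so your assertion that $\overline{\mathcal H}$ is characteristic in $\pi_1(\Sigma_p)$ (hence that the cover is regular) requires an extra word of justification---though regularity is not actually needed, since the multitwist-lift argument goes through for any finite cover once $k$ is chosen divisible by all the $e_i$.
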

\begin{proof}
Let $\pi: \Sigma_r \to \Sigma_p$ be the covering map associated to the containment $\overline{\mathcal H} \le \pi_1(\Sigma_p)$. For $k$ sufficiently large, $T_d^k$ lifts to a mapping class on $\Sigma_r$. This lift is not unique, but there is a unique lift up to the action of the deck group of $\pi$. Since $T_d$ is a Dehn twist on $\Sigma_p$, there is a distinguished lift
\begin{equation}\label{multilift}
\tilde{T_d^k} = \prod T_{\tilde {d_i}}^{k_i} 
\end{equation}
of $T_d^k$ as a multitwist on $\Sigma_r$, for certain integers $k_i$. Here, the set $\{\tilde {d_i}\}$ consists of all components of the preimage $\pi^{-1}(d)$. Observe that each curve $\tilde{d_i}$ is contained in the $\pi_1(\Sigma_p)$-conjugacy class of $d^{e_i}$ for some $e_i$, and that also the conjugacy class of $d^{e_i}$ is contained in $\overline{\mathcal H}$. As the deck group is finite, we can assume that $T_d^k$ acts on $\overline{\mathcal H}$ as a genuine multitwist, possibly after further increasing $k$.

Choose representative curves for each $\tilde{d_i}$, and represent $x \in \overline{\mathcal H}$ as a map $x(t): [0,1] \to \Sigma_r$, chosen so as to intersect the set $\{\tilde{d_i}\}$ in minimal position. This determines a sequence of arcs $\alpha_1, \dots, \alpha_{N+1}$ as follows. The points of intersection between $x$ and $\{\tilde{d_i}\}$ can  be enumerated via $0 < t_1 < \dots < t_N < t_{N+1} = 1$ such that $x(t)$ intersects the multicurve $\{\tilde{d_i}\}$ if and only if $t = t_m$ for some $1 \le m \le N$. The arc $\alpha_m$ is then defined as the image of $x$ restricted to the interval $[0,t_m]$ (so in particular, $\alpha_{N+1} = x$).

Each arc $\alpha_m$ connects $*$ to one of the curves $\tilde{d_i}$, and thus determines an element $\gamma_m'$ of $\overline{\mathcal H}$ in the conjugacy class of the appropriate $\tilde{d_i}$. The geometric description of $T_d^k$ as a multitwist allows one to obtain an expression for $T_d^k(x)$ of the desired form. The curve $T_d^k(x)$ can be described as follows: first $T_d^k(x)$ follows $\alpha_1$ to the first point of intersection with $\{\tilde{d_i}\}$; this is the curve corresponding to $\gamma_1'$. Then $T_d^k(x)$ winds around $\gamma_1'$ a number of times $f_1'$ as specified by \eqref{multilift}. Then $T_d^k(x)$ continues along the portion of $\alpha_2$ running from $t = t_1$ to $t=t_2$, and continues, winding around each $\gamma_i'$ some number of times $f_i'$ in succession. 

By construction, after each crossing of $\gamma_m'$, the curve $T_d^k(x)$ traverses the portion of $\alpha_{m+1}$ from $t_m$ to $t_{m+1}$. This can be replaced by first backtracking along $\alpha_m$, and then traversing the entirety of $\alpha_{m+1}$. Written as an element of $\pi_1(\Sigma_r) = \overline{\mathcal H}$, this analysis produces an expression
\[
T_d^k(x)  = \gamma_1'^{f_1'} \dots \gamma_N'^{f_N'} x.
\]
The claim now follows from the observation that each $\gamma_m'$ is a based loop on $\Sigma_r$ corresponding to a curve $\tilde{d_i}$. Each $\tilde{d_i}$ is a component of the preimage of $d$. As an element of $\pi_1(\Sigma_p)$, each $\gamma_m'$ is thus of the form $\gamma_m' = \gamma_m^{e_m}$ for some {\em simple} curve $\gamma_m \in \pi_1(\Sigma_p)$ in the conjugacy class of $d$. Taking $f_m = e_m f_m'$, the result follows. 
\end{proof}

Applying Lemma \ref{BPfactor}, there is an equality
\begin{equation}\label{equation:s}
s(T_c^k(x)) = s(\gamma_1^{f_1} \dots \gamma_N^{f_n} x) = s(\gamma_1^{f_1}) \dots s(\gamma_N^{f_N}) s(x) = s(x),
\end{equation}
with the last equality holding by Corollary \ref{corollary:cases}(B) since all the $\gamma_i$ are simple. We conclude that there is an equality of $\pi_1(\Sigma_g)$-conjugacy classes 
\[
T_c^k(s(x)) = s(x).
\]
By Lemma \ref{lemma:BPint}, this implies that $s(x)$ is disjoint from $c$ as curves on $\Sigma_p$. As this argument applies for {\em every} separating curve on $\Sigma_p$, we conclude that $s(x)$ must be disjoint from every separating curve $c$ on $\Sigma_p$. Since $p \ge 2$, an easy argument with the change-of-coordinates principle implies that any nontrivial element $y \in \pi_1(\Sigma_p)$ must intersect some separating curve $c$. This shows that $s(x)$ must be trivial as claimed.
\end{proof}

\subsection{Finishing the argument} The final stage of the argument exploits the fact that the existence of a section $\sigma: \overline{\mathcal H} \to \PB_{1,1}(\Sigma_p)$ places strong homological constraints on the map $s$. Throughout this section, our cohomology groups will implicitly have rational coefficients. To simplify matters further, we forget the (inessential) tangential data encoded in the space $\PConf_{1,1}(\Sigma_p)$, and consider instead the induced section
\[
\overline{\sigma}: \overline{\mathcal H} \to \PB_2(\Sigma_p);
\]
here $\PB_2(\Sigma_p)= \pi_1(\PConf_2(\Sigma_p))$ is the fundamental group of the configuration space of two ordered points on $\Sigma_p$. The space $\PConf_2(\Sigma_p)$ is, by definition, given as
\[
\PConf_2(\Sigma_p) := \Sigma_p \times \Sigma_p \setminus \Delta,
\]
where $\Delta$ is the diagonal locus. In this setting, there is a factorization
\[
s = p_{2,*} \circ \overline{\sigma}.
\]

A crucial consequence of this is that $s^*: H^*(\Sigma_p) \to H^*(\overline{\mathcal H})$ factors through $H^*(\PB_2(\Sigma_p))$. The following lemma is proved by a standard argument using the formulation of Poincar\'e duality via Thom spaces. 

\begin{lemma}\label{lemma:HPConf}
Let $[\Delta] \in H^2(\Sigma_p \times \Sigma_p)$ denote the Poincar\'e dual class of $\Delta$, and let
\[
\iota: \PConf_2(\Sigma_p) \to \Sigma_p \times \Sigma_p
\]
denote the inclusion map. Then $\iota^*([\Delta]) = 0 \in H^2(\PConf_2(\Sigma_p))$.
\end{lemma}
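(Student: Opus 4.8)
The plan is to recall the standard construction of the Poincar\'e dual of a closed oriented submanifold and feed it into the long exact sequence of the pair $(\Sigma_p \times \Sigma_p,\ \PConf_2(\Sigma_p))$. Write $M = \Sigma_p \times \Sigma_p$, a closed oriented $4$-manifold, and recall that $\PConf_2(\Sigma_p) = M \setminus \Delta$. The diagonal $\Delta \cong \Sigma_p$ is a closed oriented surface smoothly embedded in $M$, and its normal bundle $\nu$ is isomorphic to $T\Sigma_p$; in particular $\nu$ is an oriented rank-$2$ real vector bundle. By the tubular neighborhood theorem, $\Delta$ admits an open neighborhood $N \subset M$ diffeomorphic to the open disk bundle of $\nu$, with $N \setminus \Delta$ deformation retracting onto the circle bundle $\partial N$. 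Since $N$ is open and $M \setminus N$ is closed and contained in the open set $M \setminus \Delta$, excision identifies $H^\ast(M,\, M \setminus \Delta) \cong H^\ast(N,\, N \setminus \Delta)$.

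Next I would invoke the Thom isomorphism: because $\nu$ is oriented, it has a Thom class $u \in H^2(N,\, N \setminus \Delta)$, and transporting it along the excision isomorphism above yields a relative class $\tau \in H^2(M,\, \PConf_2(\Sigma_p))$. The defining property of the Poincar\'e dual $[\Delta] \in H^2(M)$ of the submanifold $\Delta$ — this is exactly the ``Poincar\'e duality via Thom spaces'' formulation referenced in the statement — is that $[\Delta]$ is the image of $\tau$ under the forget-supports map
\[
j^\ast \colon H^2(M,\, \PConf_2(\Sigma_p)) \longrightarrow H^2(M).
\]
Here is where one uses that $M$ is closed and oriented, so that $[\Delta]$ is well defined and carries this description; the orientability of $\Sigma_p$ (hence of $\nu \cong T\Sigma_p$ and of $M$) is what makes the tubular neighborhood identification, the Thom class, and this description of $[\Delta]$ simultaneously available.

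Finally I would write down the long exact sequence of the pair $(M,\, \PConf_2(\Sigma_p))$,
\[
\cdots \longrightarrow H^2(M,\, \PConf_2(\Sigma_p)) \xrightarrow{\ j^\ast\ } H^2(M) \xrightarrow{\ \iota^\ast\ } H^2(\PConf_2(\Sigma_p)) \longrightarrow \cdots,
\]
and observe that exactness gives $\iota^\ast \circ j^\ast = 0$. Since $[\Delta] = j^\ast(\tau)$, this yields $\iota^\ast([\Delta]) = \iota^\ast(j^\ast(\tau)) = 0$, which is the claim.

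I do not expect a genuine obstacle here: the entire content is the compatibility of four standard constructions — the tubular neighborhood theorem, excision, the Thom class, and the Thom-space description of the Poincar\'e dual of a submanifold. The only thing requiring care is to state these identifications so that they are mutually consistent, and to note that all the relevant bundles and manifolds are oriented so that every step is legitimate (working integrally; rational coefficients, which is all that is needed downstream, then follow a fortiori).
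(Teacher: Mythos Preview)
Your argument is correct and is precisely the ``standard argument using the formulation of Poincar\'e duality via Thom spaces'' that the paper invokes without spelling out; the paper gives no further proof beyond that remark, so there is nothing to compare against.
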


\para{Concluding the proof} Let $i: \overline{\mathcal H} \to \pi_1(\Sigma_p)$ denote the inclusion. Consider the product homomorphism
\[
i \times s: \overline{\mathcal H} \to \pi_1(\Sigma_p) \times \pi_1(\Sigma_p) \cong \pi_1(\Sigma_p \times \Sigma_p).
\]
Observe that this coincides with the section map $\overline{\sigma}: \overline{\mathcal H} \to \PB_2(\Sigma_g)$, so that there is a factorization
\[
i \times s = \iota \circ \overline{\sigma}.
\]
By Lemma \ref{lemma:HPConf}, it follows that $(i \times s)^*([\Delta]) =  (\iota \circ \overline{\sigma})^*([\Delta]) = 0 \in H^2(\overline{\mathcal H})$.

Let $x_1, y_1, \dots, x_p, y_p \in H^1(\Sigma_p)$ denote a symplectic basis with respect to the cup product form; let also $[\Sigma_p]$ denote the fundamental class. Then 
\[
[\Delta] = 1 \otimes [\Sigma_p] + [\Sigma_p] \otimes 1 + \sum_{i = 1}^p x_i \otimes y_i - y_i \otimes x_i
\]
as a class in 
\[
H^2(\Sigma_p \times \Sigma_p) \cong (H^0(\Sigma_p) \otimes H^2(\Sigma_p)) \oplus (H^2(\Sigma_p) \otimes H^0(\Sigma_p)) \oplus (H^1(\Sigma_p) \otimes H^1(\Sigma_p)).
\]
Thus
\[
0 = (i \times s)^*([\Delta]) = i^*(1) s^*([\Sigma_p]) + i^*([\Sigma_p]) s^*(1) + \sum_{j = 1}^p i^*(x_j) s^*(y_j) - i^*(y_j)s^*(x_j). 
\]

We will see that in both cases (A) and (B), this is a contradiction. Lemma \ref{lemma:caseA} asserts that in Case (A), $s^* = i^*$ in degree $1$. Since $H^*(\Sigma_p)$ is generated as an algebra in degree $1$, this implies that $s^* = i^*$ in degree $2$ as well. Then a basic calculation shows that in this case, 
\[
(i \times s)^*([\Sigma_p]) = (i \times i)^*([\Sigma_p]) = \chi(\Sigma_p)[\overline{\mathcal H}],
\]
where $\chi(\Sigma_p)$ denotes the Euler characteristic and $[\overline{\mathcal H}]$ denotes the fundamental class of the surface group $\overline{\mathcal H}$. As this is nonzero, we have arrived at a contradiction. Similarly, Lemma \ref{lemma:caseB} asserts that in Case (B), $s^* = 0$ in positive degrees. Then $(i\times s)^*([\Sigma_p]) = i^*([\Sigma_p]) = \chi(\Sigma_p)[\overline{\mathcal H}] \ne 0$, again a contradiction.  \qed

    	\bibliography{citing}{}

\begin{thebibliography}{McC82}

\bibitem[Bro94]{brown}
K.~Brown.
\newblock {\em Cohomology of groups}, volume~87 of {\em Graduate Texts in
  Mathematics}.
\newblock Springer-Verlag, New York, 1994.
\newblock Corrected reprint of the 1982 original.

\bibitem[Che17]{lei}
L.~Chen.
\newblock The universal surface bundle over the {T}orelli space has no
  sections.
\newblock Pre-print, https://arxiv.org/abs/1710.00786, 2017.

\bibitem[FLP12]{FLP}
A.~Fathi, F.~Laudenbach, and V.~Po\'enaru.
\newblock {\em Thurston's work on surfaces}, volume~48 of {\em Mathematical
  Notes}.
\newblock Princeton University Press, Princeton, NJ, 2012.
\newblock Translated from the 1979 French original by Djun M. Kim and Dan
  Margalit.

\bibitem[FM12]{FM}
B.~Farb and D.~Margalit.
\newblock {\em A primer on mapping class groups}, volume~49 of {\em Princeton
  Mathematical Series}.
\newblock Princeton University Press, Princeton, NJ, 2012.

\bibitem[McC82]{mccarthy1982normalizers}
J.D. McCarthy.
\newblock Normalizers and centralizers of pseudo-anosov mapping classes.
\newblock Pre-print, 1982.

\bibitem[Mes90]{mess}
G.~Mess.
\newblock Unit tangent bundle subgroups of the mapping class groups.
\newblock MSRI Pre-print, 1990.

\end{thebibliography}
	\bibliographystyle{alpha}

\end{document}